\newcommand{\matr}[1]{\mathrm{#1}}  
\newcounter{vecdepth}
\newcommand{\vect}[1]{          
  \ifnum\thevecdepth=0%
    \addtocounter{vecdepth}{1}%
    \mathbf{#1}%
    \addtocounter{vecdepth}{-1}%
  \else%
    \underline{#1}%
  \fi%
}
\newcommand{\restrict}[2]{      
   {\leavevmode\lower-.2em\hbox{${{#1}_{/}}_{\! {#2}}$}}
}
\newcommand{\isdef}{            
   \setlength\unitlength{1em}
   =\put(-.975,.48){{\tiny $\triangle$}}
}
\newtheorem{remarkk}{Remark} 
\newtheorem{notee}{Note}
\newtheorem{openn}{Problem}
\newtheorem{definitionn}{Definition}
\newtheorem{examplee}{Example}
\def\probitem[#1]#2\par{\item[{\sf #1} \hspace{0ex}] #2\par}    
\newenvironment{naritemize}{%
  \begin{itemize}%
    \vspace{-\topsep}%
    \vspace{-\parskip}%
    \setlength{\itemsep}{0pt}%
    \setlength{\parskip}{0pt}%
}{%
  \end{itemize}%
}
\newcounter{seqenum}
\def\emline#1#2#3#4#5#6{%
\put(#1,#2){\special{em:moveto}}%
\put(#4,#5){\special{em:lineto}}}
\newtheorem{precor}{{\bf Corollary}}
\newenvironment{cor}{\begin{precor}{\hspace{-0.5
em}{\bf.\ }}}{\end{precor}}
\newtheorem{precon}{{\bf Conjecture}}
\newenvironment{con}{\begin{precon}{\hspace{-0.5
em}{\bf.\ }}}{\end{precon}}
\newtheorem{predefin}{{\bf Definition}}
\newenvironment{defin}[1]{\begin{predefin}{\hspace{-0.5
em}{\bf.\ }}{\rm
#1}\hfill{$\spadesuit$}}{\end{predefin}}
\newtheorem{preexm}{{\bf Example}}
\newenvironment{exm}[1]{\begin{preexm}{\hspace{-0.5
em}{\bf.\ }}{\rm #1}\hfill{$\clubsuit$}}{\end{preexm}}
\newtheorem{preappl}{{\bf Application}}
\newtheorem{prelem}{{\bf Lemma}}
\newtheorem{preproof}{{\bf Proof.\ }}
\newenvironment{proof}[1]{\begin{preproof}{\rm
#1}\hfill{$\blacksquare$}}{\end{preproof}}
\newtheorem{presproof}{{\bf Sketch of Proof.\ }}
\newtheorem{prethm}{{\bf Theorem}}
\newenvironment{thm}{\begin{prethm}{\hspace{-0.5
em}{\bf.\ }}}{\end{prethm}}
\newtheorem{prealphthm}{{\bf Theorem}}
\newenvironment{alphthm}{\begin{prealphthm}{\hspace{-0.5
em}{\bf.\ }}}{\end{prealphthm}}
\newtheorem{prepro}{{\bf Proposition}}
\newenvironment{pro}{\begin{prepro}{\hspace{-0.5
em}{\bf.\ }}}{\end{prepro}}
\newtheorem{preprb}{{\bf Problem}}
\newenvironment{prb}{\begin{preprb}{\hspace{-0.5
em}{\bf.\ }}}{\end{preprb}}
\def\sds {{\sf sds }}
\def\Ssdn {{\sf Ssdn }}
\def\Wsdn {{\sf Wsdn }}
\def\Ssds {{\sf Smsds }}
\def\Wsds {{\sf Wmsds }}
\def\conct[#1,#2]{\mbox {${#1} \leftrightarrow {#2}$}}
\def\dconct[#1,#2]{\mbox {${#1} \rightarrow {#2}$}}
\def\deg[#1,#2]{\mbox {$d_{_{#1}}(#2)$}}
\def\mindeg[#1]{\mbox {$\delta_{_{#1}}$}}
\def\maxdeg[#1]{\mbox {$\Delta_{_{#1}}$}}
\def\outdeg[#1,#2]{\mbox {$d_{_{#1}}^{^+}(#2)$}}
\def\minoutdeg[#1]{\mbox {$\delta_{_{#1}}^{^+}$}}
\def\maxoutdeg[#1]{\mbox {$\Delta_{_{#1}}^{^+}$}}
\def\indeg[#1,#2]{\mbox {$d_{_{#1}}^{^-}(#2)$}}
\def\minindeg[#1]{\mbox {$\delta_{_{#1}}^{^-}$}}
\def\maxindeg[#1]{\mbox {$\Delta_{_{#1}}^{^-}$}}
\def\isdef{\mbox {$\ \stackrel{\rm def}{=} \ $}}
\def\dre[#1,#2,#3]{\mbox {${\cal E}_{_{#3}}(#1,#2)$}}
\def\pdre[#1,#2,#3]{\mbox {${\cal P}_{_{#3}}(#1,#2)$}}
\def\var[#1,#2]{\mbox {${\rm Var}_{_{#1}}(#2)$}}
\def\ls[#1]{\mbox {$\xi^{^{#1}}$}}
\def\hom[#1,#2]{\mbox {${\rm Hom}({#1},{#2})$}}
\def\onvhom[#1,#2]{\mbox {${\rm Hom^{v}}(#1,#2)$}}
\def\onehom[#1,#2]{\mbox {${\rm Hom^{e}}(#1,#2)$}}
\def\core[#1]{\mbox {$#1^{^{\bullet}}$}}
\def\cay[#1,#2]{\mbox {${\rm Cay}({#1},{#2})$}}
\def\cays[#1,#2]{\mbox {${\rm Cay_{s}}({#1},{#2})$}}
\def\dirc[#1]{\mbox {$\stackrel{\rightarrow}{C}_{_{#1}}$}}
\def\cycl[#1]{\mbox {${\bf Z}_{_{#1}}$}}
\begin{document}
\footnotetext[1]{Correspondence should be addressed to {\tt
daneshgar@sharif.ir}.}
\begin{center}
{\Large \bf  ON SEQUENTIAL COLORING OF GRAPHS\\ AND ITS DEFINING SETS}\\
\vspace*{0.5cm}
{\bf Amir Daneshgar\\ and \\Roozbeh Ebrahimi Soorchaei}\\
{\it Department of Mathematical Sciences} \\
{\it Sharif University of Technology} \\
{\it P.O. Box {\rm 11155--9415}, Tehran, Iran.}\\
2008, Dec., 28\\ \ \\

\end{center}
\begin{abstract}
\noindent In this paper, based on the contributions of Tucker (1983)
and Seb{\H{o}} (1992), we generalize the concept of a sequential
coloring of a graph to a framework in which the algorithm may use a
coloring rule-base obtained from suitable forcing structures. In
this regard, we introduce the {\it weak} and {\it strong sequential
defining numbers} for such colorings and as the main results, after
proving some basic properties, we show that these two parameters are
intrinsically different and their spectra are nontrivial. Also, we
consider the natural problems related to the complexity of computing
such parameters and we show that in a variety of cases these
problems are ${\bf NP}$-complete. We conjecture that this result
does not depend on the rule-base for all nontrivial cases.
\end{abstract}
\section{Introduction}\label{INTRO}
Graph coloring, as a special case of the graph homomorphisms
problem, is widely known to be among the most fundamental problems
in graph theory and combinatorics. On the other hand, a long list of
applications of the problem within and out of graph theory itself,
not only adds to the importance of the problem, but also, asks for
efficient and easy-to-use solutions in different cases (e.g. see
\cite{DW90,HN04,JT95}). Unfortunately, based on some recent deep
nonapproximability results (e.g. among other references see
\cite{AR?,GHS02} for the applications of the PCP theorem), it is
known that the coloring problem is among the hardest problems in
${\bf NP}$ and does not admit an efficient solution/approximation,
unless ${\bf P}={\bf NP}$ (or similar coincidence
for randomized classes as ${\bf RP}$). Therefore, it is quite
important, at least as far as the applications are concerned, to
acquire a good understanding of the coloring structures of graphs,
and to develop efficient algorithms
for the tractable cases.\\
Sequential coloring of a given graph is an alternative approach to
the classical coloring problem that may not lead to the same
solution (e.g. see \cite{HDW89,MAF03,SEBO,TUC83}), however, based on the algorithm and its rule-base, it is
conceivable to seek a set of vertices with preset colors that guides
the algorithm to the ideal solution.
This approach not only can be considered as a partial solution to the original problem, but also will give rise to a hierarchy of graphs,
based on their coloring structures, which is ordered using the rule-base of the algorithm.\\
Probably, the most popular algorithm of this type is the so-called
{\it greedy algorithm} that uses the very simple (and independent of
the structure) rule of setting the first color available, regardless
of the neighborhood structure of the vertex visited. Our approach in
this paper, is to enhance such an algorithm to be able to use a
local coloring rule-base, depending on the neighborhood structure of
the vertices, and use these rule-bases to introduce a hierarchy
of graphs based on their coloring structures. \\
To the best of authors' knowledge, {\it sequential coloring} of
graphs is formally first introduced by A.~Tucker in \cite{TUC83} to
study perfect graphs and is also studied afterwards (Particularly,
see \cite{SEBO} for a very interesting application and connections
to other classes of graphs as perfect and uniquely colorable
graphs). The very closely related concept of {\it online coloring}
of graphs in which one assumes that the vertices are ordered (say
$\{v_{_{1}},\cdots,v_{_{n}}\}$) and the algorithm at the $k$'th
vertex $v_{_{k}}$ is only informed about the data of vertices
$\{v_{_{1}},\cdots,v_{_{k-1}}\}$, is also extensively studied for
its important applications in real industrial problems, where the
greedy version is sometimes called the {\it first fit} or {\it
Grundy} coloring. However, in sequential coloring of graphs, we
assume that the algorithm at each vertex is informed about the
structure and the data of a neighborhood of the vertex (regardless
of the indices). Therefore, from an algorithmic point of view, the
main difference between the online and the sequential approaches
falls in the way that the algorithm is informed about the structure
of the graph. In this article we concentrate on the concept of
sequential coloring of graphs, while most of
the results can be extended to the online version applying the necessary modifications.\\
In our approach, we generalize the sequential model in two ways.
Firstly, we let the algorithm have access to a neighborhood of
radius $d$ of each vertex, where $d$ is a constant of the algorithm,
and  secondly, we confine the coloring rules of the algorithm to be
only accessible through a fixed rule-base ${\cal R}$. Such
rule-bases are introduced by using the concept of a {\it forcing
structure} discussed in
Section~\ref{FORCING} (see \cite{DAN97-2,DAN97-3,DAN98-1,DAN99-1,DANA00,DAN01} for the background).
Our results in this section show that there is a general machinery to generate a variety of rule-bases
to match the user's needs. \\
Our main objective in this paper is to study the defining sets of
sequential colorings of graphs and analyze the computational
complexity of the related problems. As a brief historical note we
may note that the concept of a defining set of graph colorings is
extensively studied (e.g. see \cite{DMRS03}), specially for the case
of Cartesian product of complete graphs which is essentially what is
already known as {critical sets} of Latin squares (e.g. see
\cite{CAV,KEED04,KEED96}). To the best of the first author's
knowledge, the concept of an online critical set for a Latin square
is first introduced by E.~Mendelsohn (the first author heard about
this from E.~Mendelsohn during meetings and talks at Sharif
University of Technology on November $1998$). The concept was soon
generalized to the concept of a defining set of an online graph
coloring by M.~Zaker and the first
results on these are published by him (see \cite{VAN,ZAK01-1,ZAK01-2,ZAK06,ZAK08-1}).\\
As far as we are aware, the concept of a defining set of sequential
colorings of a graph is first introduced by the first author and it
is appeared in the main result of \cite{DAHT?}, where it is proved
that any boolean function can be simulated by a graph, where to
compute
the function for some given values, one may use the values as colors of a defining set of a sequential coloring of the corresponding graph.\\
Needless to say, the contributions of A.~Tucker and A.~Seb{\H{o}}
ought to be considered as the origins of sequential colorings of
graphs, however the approach in \cite{DAHT?} and in this paper is
mainly inspired by the applications of the concept of {\it effective}
defining sets in cryptography (also see \cite{ZAK08-2}). Therefore,
in the rest of this section we present a short overview of the
contributions of A.~Tucker and A.~Seb{\H{o}} and we also provide the
basic concepts we need in the rest of the article, namely {\it
forcing structures}
and {\it graph amalgams}.\\
In Section~\ref{SEQDEF} we precisely define the concept of a {\it
sequential coloring} of a graph, as well as the spectra of {\it
weak} and {\it strong defining numbers}, and moreover, we prove some
basic results in this regard. Specially, among our main results in
this section we show that the two concepts of {\it weak} and {\it
strong} defining numbers are intrinsically different
(Theorem~\ref{WSDIF}) and also we show that each
spectrum is completely nontrivial (Theorem~\ref{NONTRIVSPEC}).\\
Our main contribution in Section~\ref{COMPLEXITY} are a couple of
${\bf NP}$-completeness results (generally) stating that computation
of weak and strong defining numbers are {${\bf NP}$-complete}
problems. Although we have not been able to obtain efficient and
easy to present proofs for the fact that the ${\bf NP}$-completeness
result does not depend on the rule-base {\it in general}, we
strongly believe that this is the case for almost all nontrivial
cases, where Theorems~\ref{T3}, \ref{T4} and \ref{T5} as the main
results
of this section are positive justifications in this regard.\\
Throughout the paper we will be considering finite simple graphs.
The vertex set of a graph $\matr{G}$ is denoted by $V(\matr{G})$ and
the edge set by $E(\matr{G})$, and for any subset $A \subseteq
V(\matr{G})$ the notation $\matr{G}[[A]]$ stands for the vertex
induced subgraph on $A$. An edge with two end vertices $u$ and $v$
is denoted by $uv$ and $N_{_{\matr{G}}}(v) \isdef \{u \in
V(\matr{G}) \ \ | \ \ uv \in E(G) \}$ is the set of neighbours of
the vertex  $v$ in $\matr{G}$. Throughout the paper, $|A|$ denotes
the size of the set $A$. Also, $\matr{K}_{_{n}}$ is the complete
graph on $n$ vertices and $cl(\matr{G})$ is the clique number of the
graph $\matr{G}$. A graph $\matr{G}$ is said to be {\it
$t$-colorable} if it admits a proper vertex $t$-coloring. Also,
$\matr{G}$ is said to be $t$-chromatic if $t=\chi(\matr{G})$, where
$\chi(\matr{G})$ is the ordinary chromatic number of $\matr{G}$. For
basic concepts of graph theory we refer to \cite{WES96}.\\
In what follows we recall a more or less standard notation for
amalgams which is adopted from \cite{DAHT?}
(the interested reader may also consult \cite{DAHT?} for more on this or read the Appendix of this article).\\
Let $X=\{x_{_{1}},x_{_{2}},\ldots,x_{_{k}}\}$ and $\matr{G}$ be a
set and a graph, respectively, and also, consider a one-to-one map
$\varrho: X \longrightarrow V(\matr{G})$. Evidently, one can
consider $\varrho$ as a graph monomorphism from the empty graph
$\matr{X}$ on the vertex set $X$ to the graph $\matr{G}$, where in
this setting we interpret the situation as a {\it labeling} of some
vertices of $\matr{G}$ by the elements of $X$. The data introduced
by $(X,\matr{G},\varrho)$ is called a {\it marked graph} $\matr{G}$
marked by the set $X$ through the map $\varrho$. Note that (by abuse
of language) we may introduce the corresponding marked graph as
$\matr{G}[x_{_{1}},x_{_{2}},\ldots,x_{_{k}}]$ when the definition of
$\varrho$ (especially its range) is clear from the context. Also,
(by abuse of language) we may refer to {\it the vertex $x_{_{i}}$}
as the vertex $\varrho(x_{_{i}}) \in V(\matr{G})$. This is most
natural when $X \subseteq V(G)$ and vertices in $X$ are marked by
the corresponding elements in
$V(G)$ through the identity mapping.
We use the following notation
$$\matr{G}[x_{_{1}},x_{_{2}},\ldots,x_{_{k}}]+\matr{H}[y_{_{1}},y_{_{2}},\ldots,y_{_{l}}]$$
for the graph (informally) constructed by taking the disjoint union
of the marked graphs $\matr{G}[x_{_{1}},x_{_{2}},\ldots,x_{_{k}}]$
and $\matr{H}[y_{_{1}},y_{_{2}},\ldots,y_{_{l}}]$ and then {\it
identifying} the vertices with the same mark (for a formal and
precise definition see the Appendix). It is understood that a
repeated appearance of a graph structure in an expression as
$\matr{G}[v]+\matr{G}[v,w]$ is always considered as different
isomorphic copies of the structure marked properly by the indicated
labels (e.g. $\matr{G}[v]+\matr{G}[v,w]$ is an amalgam constructed
by two different isomorphic copies of $\matr{G}$ identified on the
vertex $v$ where the
vertex $w$ in one of these copies is marked).\\
By $\matr{K}_{_{k}}[v_{_{1}},v_{_{2}},\ldots,v_{_{k}}]$ we mean a
$k$-clique on $\{v_{_{1}},v_{_{2}},\ldots,v_{_{k}}\}$ marked by its
own set of vertices. Specially, a single edge is denoted by
$\matr{\varepsilon}[v_{_{1}},v_{_{2}}]$ (i.e.,
$\matr{\varepsilon}[v_{_{1}},v_{_{2}}] =
\matr{K}_{_{2}}[v_{_{1}},v_{_{2}}]$). As one more simple example
note that
$\matr{\varepsilon}[v_{_{1}},v_{_{2}}]+\matr{\varepsilon}[v_{_{2}},v_{_{3}}]$
is a path of length $2$ on the vertex set
$\{v_{_{1}},v_{_{2}},v_{_{3}}\}$.
\subsection{Forcing in graph coloring}\label{FORCING}
Given a graph $\matr{G}$, consider a set ${\cal L}=\{L_{_{v}}\}_{_{v
\in V(\matr{G}) }}$ with $L_{_{v}} \subseteq \{1,\ldots,t\}$ such
that $t \geq \chi(\matr{G}) $ is a fixed integer. Hereafter,
$$\|{\cal L}\| \isdef \displaystyle{\sum_{v \in V(\matr{G})}}\ |L_{_{v}}|.$$
The list coloring problem, $(\matr{G},{\cal L},t)$, is to find a  coloring
$\sigma:V(\matr{G}) \longrightarrow \{1,\ldots,t\}$ such that for all pairs of distinct vertices
$u$ and $v$,
$$(\sigma(v) \in L_{_{v}}) \quad {\rm and} \quad (uv \in E(G) \Rightarrow \sigma(u) \not = \sigma(v)).$$
To see that this can also be viewed as an embedding problem,
assume that the list coloring problem $(\matr{G},{\cal L},t)$  is given,
and let $\matr{H}=\matr{G} \cup \matr{K}_{_{t}}$ with $V(\matr{K}_{_{t}})=\{v_{_{1}}, \ldots
,v_{_{t}}\}$. Also, assume that (without loss of generality) we
have fixed the colors  of vertices $V(\matr{K}_{_{t}})=\{v_{_{1}},
\ldots ,v_{_{t}}\}$ such that $v_{_{i}}$ has taken the color $i$
for all $i \in \{1, \ldots ,t\}$. Now, one may construct a new
graph, $\  \tilde{\matr{H}}_{_{\matr{G},\Phi,t}}$, such that for each $u \in V(\matr{G})$
and $L_{_{u}} \in {\cal L}$ we add new edges
$\Phi_{_{u}}=\{uv_{_{i}}\}_{_{i \not \in L_{_{u}}}}$ to the graph
$\matr{H}$. If
$$\Phi \isdef \displaystyle{\bigcup_{_{u \in V(\matr{G})}}} \Phi_{_{u}},$$
then it is clear that there is a one-to-one correspondence between the set of $t$-colorings of the graph $\tilde{\matr{H}}_{_{\matr{G},\Phi,t}}$
and the set of solutions ($t$-colorings) of the list coloring problem $(\matr{G},{\cal L},t)$ (given by restriction).
A graph $\matr{G}$ is called uniquely
${\cal L}$-list-colorable, if the list coloring problem,
$(\matr{G},{\cal L},t)$, on $\matr{G}$ with lists ${\cal L}=\{L_{_{v}}\}_{_{v
\in V(\matr{G}) }}$ has a unique solution.
Also, when the lists are all equal to the set $\{1,\ldots,t\}$,
such a graph is called a uniquely $t$-colorable graph or a $t$-UCG for short if the list coloring problem has a unique solution up to permutation
of colors which means that the color classes are fixed for all solutions. For more on this subject see \cite{DAHA05,JT95}.\\
Hereafter,  we will freely switch between the embedding
setup and the list coloring approach. Also, we write ${\cal L}
\subseteq {\cal L}'$ as a shorthand for $L_{_{v}} \subseteq L'_{_{v}}$
for all $v$.\\
Consider a list coloring problem $(\matr{H},{\cal L},t)$, and a subgraph $\matr{G} \leq \matr{H}$
such that
$$A \isdef \displaystyle{\bigcup_{_{u \in V(\matr{G})}}} L_{_{u}}, \quad {\rm and} \quad |A|=k \leq t,$$
where the list coloring problem $(\matr{G},{\cal L},k)$ has a unique solution (up to permutation of colors).
Then for any vertex $v \not \in V(\matr{G})$ whose neighborhood in $\matr{G}$ intersects all
color-classes of $(\matr{G},{\cal L},k)$, one may replace $L_{_{v}}$ by
$L_{_{v}}-A$ without any change in the solution-set of the
list coloring problem $(\matr{H},{\cal L},t)$. Such a situation is called a
{\it type~$1$ forcing} on $v$ by $\matr{G}$ \cite{DAN97-2,DAN97-3,DAN98-1,DAN01}. Note that a special case of this
situation is when $\matr{G} \leq \matr{H}$ is a $k$-UCG.
\begin{exm}{\label{FORC1} Consider the minimal $3$-UCG of Figure~\ref{G3TRU81}, and let's try to find
a $3$-coloring $\sigma$ of it. It is quite easy to see that without loss of generality we have
$$\sigma(v_{_{1}})=\sigma(v_{_{4}})=1, \quad \sigma(v_{_{2}})=2 \quad {\rm and} \quad \sigma(v_{_{3}})=3.$$
Then it is clear that the set of possible colors for both of the vertices $v_{_{5}}$ and $v_{_{6}}$
is $\{2,3\}$, and consequently, by a type~$1$ forcing on the $\matr{K}_{_{2}}$-clique induced on $\{v_{_{5}},v_{_{6}}\}$
we have $\sigma(v_{_{7}})=1$.
}\end{exm}
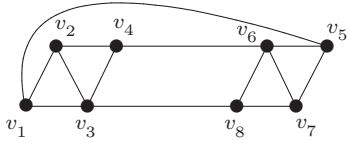
\begin{figure}[t]
\centering
\special{em:linewidth 0.4pt} \unitlength 0.40mm
\linethickness{0.4pt}
\begin{picture}(124.00,76.67)
\put(20.00,20.33){\circle*{4.00}}
\put(40.33,20.00){\circle*{4.00}}
\put(90.00,20.00){\circle*{4.00}}
\put(109.67,20.00){\circle*{4.00}}
\put(30.00,40.00){\circle*{4.00}}
\put(50.00,40.00){\circle*{4.00}}
\put(100.00,40.00){\circle*{4.00}}
\put(120.00,40.00){\circle*{4.00}}
\emline{20.00}{20.33}{1}{30.00}{40.00}{2}
\emline{30.00}{40.00}{3}{50.00}{40.00}{4}
\emline{50.00}{40.00}{5}{40.33}{20.00}{6}
\emline{40.33}{20.00}{7}{30.00}{39.67}{8}
\emline{20.00}{20.00}{9}{40.33}{20.00}{10}
\emline{40.33}{20.00}{11}{90.00}{20.00}{12}
\emline{90.00}{20.00}{13}{109.67}{20.00}{14}
\emline{109.67}{20.00}{15}{120.00}{40.00}{16}
\emline{120.00}{40.00}{17}{100.00}{40.00}{18}
\emline{100.00}{40.00}{19}{90.00}{20.00}{20}
\emline{109.67}{20.00}{21}{100.00}{40.00}{22}
\emline{100.00}{40.00}{23}{50.00}{40.00}{24}
\emline{19.67}{20.00}{25}{19.12}{22.96}{26}
\emline{19.12}{22.96}{27}{18.74}{25.78}{28}
\emline{18.74}{25.78}{29}{18.55}{28.48}{30}
\emline{18.55}{28.48}{31}{18.52}{31.04}{32}
\emline{18.52}{31.04}{33}{18.68}{33.46}{34}
\emline{18.68}{33.46}{35}{19.01}{35.76}{36}
\emline{19.01}{35.76}{37}{19.53}{37.92}{38}
\emline{19.53}{37.92}{39}{20.21}{39.94}{40}
\emline{20.21}{39.94}{41}{21.08}{41.84}{42}
\emline{21.08}{41.84}{43}{22.12}{43.60}{44}
\emline{22.12}{43.60}{45}{23.34}{45.23}{46}
\emline{23.34}{45.23}{47}{24.74}{46.72}{48}
\emline{24.74}{46.72}{49}{26.31}{48.09}{50}
\emline{26.31}{48.09}{51}{28.06}{49.32}{52}
\emline{28.06}{49.32}{53}{29.99}{50.41}{54}
\emline{29.99}{50.41}{55}{32.09}{51.38}{56}
\emline{32.09}{51.38}{57}{34.37}{52.21}{58}
\emline{34.37}{52.21}{59}{36.83}{52.91}{60}
\emline{36.83}{52.91}{61}{39.47}{53.47}{62}
\emline{39.47}{53.47}{63}{42.28}{53.90}{64}
\emline{42.28}{53.90}{65}{45.27}{54.20}{66}
\emline{45.27}{54.20}{67}{48.44}{54.37}{68}
\emline{48.44}{54.37}{69}{51.78}{54.40}{70}
\emline{51.78}{54.40}{71}{55.30}{54.30}{72}
\emline{55.30}{54.30}{73}{59.00}{54.07}{74}
\emline{59.00}{54.07}{75}{62.88}{53.70}{76}
\emline{62.88}{53.70}{77}{66.93}{53.21}{78}
\emline{66.93}{53.21}{79}{71.16}{52.57}{80}
\emline{71.16}{52.57}{81}{75.57}{51.81}{82}
\emline{75.57}{51.81}{83}{80.15}{50.91}{84}
\emline{80.15}{50.91}{85}{84.91}{49.88}{86}
\emline{84.91}{49.88}{87}{89.85}{48.72}{88}
\emline{89.85}{48.72}{89}{94.97}{47.42}{90}
\emline{94.97}{47.42}{91}{100.26}{45.99}{92}
\emline{100.26}{45.99}{93}{105.73}{44.43}{94}
\emline{105.73}{44.43}{95}{111.38}{42.73}{96}
\emline{111.38}{42.73}{97}{120.00}{40.00}{98}
\put(17.33,12.67){\makebox(0,0)[cc]{$_{v_{_{1}}}$}}
\put(33.67,46.00){\makebox(0,0)[cc]{$_{v_{_{2}}}$}}
\put(40.33,12.67){\makebox(0,0)[cc]{$_{v_{_{3}}}$}}
\put(53.00,46.33){\makebox(0,0)[cc]{$_{v_{_{4}}}$}}
\put(113.67,13.00){\makebox(0,0)[cc]{$_{v_{_{7}}}$}}
\put(90.00,12.66){\makebox(0,0)[cc]{$_{v_{_{8}}}$}}
\put(94.00,44.00){\makebox(0,0)[cc]{$_{v_{_{6}}}$}}
\put(124.00,44.67){\makebox(0,0)[cc]{$_{v_{_{5}}}$}}
\end{picture}
\caption{\protect\label{G3TRU81} A minimal $3$-UCG (see Examples~\ref{FORC1} and \ref{FORC2}).}
\end{figure}
Let us recall the following definition and theorem from \cite{DAN99-1}.
For a $(t-1)$--chromatic graph $\matr{G}$, a list (repetition is allowed)
$${\cal F}=\{(i,W_{_{i}}) \ | \ 1 \leq i \leq l\}$$
with $W_{_{i}} \subseteq V(\matr{G})$ is called a
{\it transverse system}
for $\matr{G}$ if either
$$\matr{G}=\matr{K}_{_{t-1}} \quad \& \quad {\cal F}=\{(1,V(\matr{G}))\},$$
or both of the
following conditions are satisfied;
\begin{itemize}
\item{For every $(t-1)$--coloring $\sigma$ of $\matr{G}$,
if $(i,W) \in {\cal F}$ then $W$ has nonempty
intersection with all color--classes of $\sigma$.}
\item{
For every
$t$--coloring $\sigma : V(\matr{G}) \stackrel{\rm onto}{\longrightarrow}
\{ 1, \dots , t \}$ of $\matr{G}$, there exists
$(i,W) \in {\cal F}$ such that $W$
has nonempty intersection with
all color--classes of $\sigma$.}
\end{itemize}
\begin{alphthm}\label{TRANSVERSE}{\rm~\cite{DAN99-1}}
Let $\matr{H}$ be a $t$--chromatic graph
such that in every
$t$--coloring of $\matr{H}$
there is a fixed color--class $V^{^{*}}$ consisting of $m$
specified vertices $v_{_{1}}, \dots , v_{_{m}}, \ \ (m \geq 1)$;
and consider $\matr{G}=\matr{H}-V^{^{*}}$.
Also define
${\cal F}=\{ (i,N_{_{\matr{G}}}(v_{_{i}}))
\ \ | \ \  v_{_{i}} \in V^{^{*}} \}$. Then,
\begin{itemize}
\item[a {\rm )}]{$\chi(\matr{G})=t-1$ and ${\cal F}$ is a transverse system
for $\matr{G}$.}
\end{itemize}
Moreover if $cl(\matr{H}) \leq t-1$ then,
\begin{itemize}
\item[b {\rm )}]{$cl(\matr{G}[[W]]) \leq t-2$
for every $(i,W) \in {\cal F}$,
and $cl(\matr{G}) \leq t-1$.}
\end{itemize}
Conversely, let $\matr{G}$ be a $(t-1)$--chromatic graph
and let  ${\cal F} \subseteq {\bf N} \times 2^{V(\matr{G})}$
be a transverse system for $\matr{G}$.
Then the graph $\matr{H}$ obtained by adding to $\matr{G}$ new
vertices $v_{_{i}}$, for each $((i,W_{_{i}}) \in {\cal F})$,
and joining each $v_{_{i}}$
to all vertices in $W_{_{i}}$ is a $t$--chromatic graph
such that in any one of its $t$--colorings the class
$V^{^{*}}=\{v_{_{i}}\ \ |
\ \ (i,W_{_{i}}) \in {\cal F} \}$ is fixed.
If in addition {\rm (}$b${\rm )} is also fulfilled then
$cl(\matr{H}) \leq t-1$.
\end{alphthm}
Consider a list coloring problem $(\matr{M},{\cal L},l)$, and subgraphs $\matr{G} \leq \matr{H} \leq \matr{M}$,
with \linebreak
 $V^{^{*}} \isdef V(\matr{H})-V(\matr{G})$, such that $\matr{G}$ and $\matr{H}$ satisfy all conditions of Theorem~\ref{TRANSVERSE}.
Then one may replace the lists of all vertices in $V^{^{*}}$ by
$$L \isdef \displaystyle{\bigcap_{_{u \in V^{^{*}}}}} L_{_{u}},$$
without any change in the solution-set of the
list coloring problem $(\matr{H},{\cal L},l)$. Such a situation is called a
{\it type~$2$ forcing} on $V^{^{*}}$ by $\matr{G}$ \cite{DAN97-2,DAN97-3,DAN98-1,DAN01}. Note that a special case of this
situation is when $\matr{H}$ is a $t$-UCG.
\begin{exm}{\label{FORC2}Again, consider the minimal $3$-UCG of Figure~\ref{G3TRU81}, and let's try to find
a $3$-coloring $\sigma$ of it. It is quite easy to see that without loss of generality we have
$$\sigma(v_{_{1}})=1, \quad \sigma(v_{_{2}})=2 \quad {\rm and} \quad \sigma(v_{_{3}})=3.$$
Then it is clear that the set of possible colors for the vertex $v_{_{5}}$ is $\{2,3\}$ and
the set of possible colors for the vertex $v_{_{8}}$ is $\{1,2\}$.
Now,  by a type~$2$ forcing on the subgraph induced on $\{v_{_{5}},v_{_{6}},v_{_{7}},v_{_{8}}\}$,
in which both $v_{_{5}}$ and $v_{_{8}}$ have the same color in any $3$-coloring,
we may replace the list of colors of these vertices by the intersection, which shows that,
$$\sigma(v_{_{5}})=\sigma(v_{_{8}})=2.$$
}\end{exm}
Hence, using type~$1$ and $2$ forcings along with a suitably chosen uniquely colorable graph one may construct a variety of
forcing structures to analyze or construct new coloring structures.
\subsection{Sequential coloring and perfect graphs} \label{PERFECT}
In \cite{TUC83} Tucker introduces an algorithmic approach to the
Strong Perfect Graph Conjecture.  He calls a $t$--UCG, $\matr{G}$,
{\it sequentially $t$--colorable}, if the $n$ vertices of $\matr{G}$
can be indexed as $v_{_{1}}, \dots ,v_{_{n}}$ such that $v_{_{1}},
\dots ,v_{_{t}}$ are in a clique (and initially are arbitrarily
given different colors) and after $v_{_{1}}, \dots ,v_{_{m-1}} \ \
(m > t)$ are colored, $v_{_{m}}$ has a {\it forced} coloring by one
of the following
sequential coloring rules.\\
\begin{itemize}
\item[1 {\rm )}]{If $v_{_{m}}$ is adjacent to previously colored vertices
of $t-1$ (but not $t$) different colors, then $v_{_{m}}$ must be
given the one color not used by its previously colored neighbors.}
\item[2 {\rm )}]{If $v_{_{j}}$ and $v_{_{m}}$, $(j < m)$, are each adjacent
to all vertices of a clique of size $t-1$ (but $v_{_{j}}$ and
$v_{_{m}}$ are not adjacent to each other), then $v_{_{m}}$ must be
given the same color as $v_{_{j}}$.}
\item[3 {\rm )}]{If all vertices of color $i$ have been sequentially colored
(using rules $1$ and $2$), these vertices can be deleted and $t$ can
be reduced by one in subsequent use of rules $1$ and $2$.}
\end{itemize}
Tucker also calls a $t$--UCG {\it strongly sequentially
$t$--colorable} if it can be sequentially colored starting from any
$t$--clique and using only rule $1$. In these directions he proves
\begin{alphthm}{\rm \cite{TUC83}}\
If $\matr{G}$ is a comparability graph or the complement of a
comparability graph, then $\matr{G}$ is uniquely $t$--colorable if
and only if $\matr{G}$ is sequentially $t$--colorable.
\end{alphthm}
\begin{alphthm}{\rm \cite{TUC83}}\
If $\matr{G}$ is a chordal graph or the complement of a chordal
graph, then $\matr{G}$ is uniquely $t$--colorable if and only if
$\matr{G}$ is sequentially $t$--colorable.
\end{alphthm}
Tucker also  proposes the following conjecture.
\begin{con}{\rm \cite{TUC83}}\
For perfect graphs, unique $t$--colorability is equivalent to
sequential $t$--colorability.
\end{con}
It is known that $\matr{G}-v$ is a uniquely colorable graph for any minimal imperfect graph $\matr{G}$
and any one of its vertices $v$. A well-known approach to the Strong Perfect Graph Conjecture (SPGC) is to study the following two problems:
\begin{itemize}
\item{Prove the existence of Tucker forcing structures in uniquely colorable perfect graphs, and specially in cases where
the graph is obtained from a minimal imperfect graph by excluding one of the vertices.}
\item{Prove that a minimal imperfect graph with enough Tucker forcing structures is either an odd hole or an antihole.}
\end{itemize}
For more on this the reader may refer to \cite{FS90,KS97,SEBO} where
in \cite{SEBO}, among other results, some nice reformulations of
SPGC in relation with Tucker's forcing structures and unique
colorability are given.
\section{Sequential coloring of ordered list-graphs}\label{SEQDEF}
\begin{defin}{
Any marked graph $\varrho:\{1,2,\ldots,|V(\matr{G})|\}
\longrightarrow \matr{G}$ will be called an {\it ordered} graph and
will be denoted in an abbreviated style as $\matr{G}[\varrho]$ or
$\matr{G}[n]$ when $|V(\matr{G})|=n$ and
$\varrho$ is clear from the context (usually assumed to be $i \mapsto v_{_{i}}$). \\
Let $t$ be a fixed integer and ${\cal
L}=\{L_{_{1}},L_{_{2}},\ldots,L_{_{n}}\}$ be a set of lists for
which $L_{_{i}} \subseteq \{1,2,\ldots,t\}$ for each $i$. Then a
graph $\matr{G}$ with $n=|V(\matr{G})|$ marked by ${\cal L}$ is
called a {\it list-graph} and is denoted by $\matr{G}[{\cal L}]$
(when we assume that the mapping is $L_{_{v}} \mapsto  v$).
Similarly, a graph $\matr{G}$ marked by a set
$\{(1,L_{_{1}}),(2,L_{_{2}}),\ldots,(n,L_{_{n}})\}$ is called an
{\it ordered list-graph} and is denoted by $\matr{G}[n,{\cal L}]$.\\
In all cases $\matr{G}$ will be referred to as the {\it base-graph}.
Also, when the list assignment ${\cal L}$ determines a proper
coloring $\gamma$ of $\matr{G}[n]$ (i.e. for any vertex $v$ we have
$|L_{_{v}}|=1$) then $\matr{G}[n,\gamma]$ is called an {\it ordered
colored-graph}\footnote{Note that considering the contents of the
previous section, and by abuse of notation, e.g.
$\matr{G}[\varrho,\gamma][u,v]$, stands for a {\it marked, ordered}
and {\it colored} graph where the ordering $\varrho$, the coloring
$\gamma$ and marks $u,v$ are clear from the context. }.
 }\end{defin}

\begin{defin}{\label{RULEBASE}
A {\it local $t$-coloring rule} ${\sf R}$ on the base-graph
$\matr{H}$ is an updating rule for the lists of a list-graph
$\matr{H}[{\cal L}]$ that changes it to a new list-graph
$\matr{H}[{\cal L}']$ in which the base-graph is not changed and,
\begin{itemize}
\item[{\rm a)}]{For each vertex $v \in V(\matr{H})$ we have $L'_{_{v}} \subseteq L_{_{v}}$.}
\item[{\rm b)}]{The list coloring problems $(\matr{H}[{\cal L}],t)$ and $(\matr{H}[{\cal L}'],t)$
have the same set of solutions.}
\item[{\rm c)}]{If using ${\sf R}$, $\matr{H}[{\cal L}_{_{1}}]$ is changed to $\matr{H}[{\cal L}'_{_{1}}]$,
$\matr{H}[{\cal L}_{_{2}}]$ is changed to $\matr{H}[{\cal
L}'_{_{2}}]$
 and ${\cal L}_{_{1}} \subseteq {\cal L}_{_{2}}$ then ${\cal L}'_{_{1}} \subseteq {\cal L}'_{_{2}}$.
}
\end{itemize}
Such a replacement of lists is usually denoted as
$${\sf R}: \quad  {\bf condition} \quad \Rightarrow \quad L \leftarrow L'$$
which means that the list $L$ is replaced by the list $L'$ if {\bf condition} is satisfied.
}\end{defin}
Note that a standard way of constructing such coloring rules is through constructing forcing structures, e.g. type~$1$ or type ~$2$
forcing introduced in Section~\ref{FORCING}. In what follows we formulate a couple of well-known rules based on what has already been discussed
in previous sections.
\begin{exm}{The following rule is called the Tucker's first rule, and is denoted by  ${\sf R}^{^{Tu}}_{_{1}}$ (see Section~\ref{PERFECT} for motivations).
\begin{equation}
   \left (\exists\ t > 1,\quad |A| \isdef |\bigcup_{x_{_{t}} \not = x_{_{i}} \in
   V(\matr{K}_{_{t}}[x_{_{1}},x_{_{2}},\ldots,x_{_{t}}]) } L_{_{x_{_{i}}}}|=t-1\right )
   \Rightarrow \left (L_{_{x_{_{t}}}} \leftarrow L_{_{x_{_{t}}}} - A \right ).
\end{equation}
Note that Tucker's first rule is a type~$1$ forcing on a clique.
Also, define $\matr{T}_{_{t}}^{^2}[u,v]$ as follows,
$$\matr{T}_{_{t}}^{^2}[u,v,x_{_{1}},x_{_{2}},\ldots,x_{_{t-1}}] \isdef \matr{K}_{_{t}}[u,x_{_{1}},x_{_{2}},\ldots,x_{_{t-1}}]+
\matr{K}_{_{t}}[v,x_{_{1}},x_{_{2}},\ldots,x_{_{t-1}}].$$
 Also, the following rule is called the Tucker's second rule, and is denoted by  ${\sf
 R}^{^{Tu}}_{_{2}}$ (see Section~\ref{PERFECT} for motivations).
\begin{equation}
   \left (\exists\ t > 1,\quad |\bigcup_{w \in V(\matr{T}_{_{t}}^{^2}[u,v,x_{_{1}},x_{_{2}},\ldots,x_{_{t-1}}])} L_{_{w}}|=t\right ) \Rightarrow
   \left (L_{_{u}} \leftarrow L_{_{u}} \cap L_{_{v}}\right ).
\end{equation}
It is also clear that Tucker's second rule is a type~$2$ forcing on a clique.
}\end{exm}
Hereafter, we adopt the notation ${\cal R}_{_{T}}  \isdef \{{\sf R}^{^{Tu}}_{_{1}},{\sf R}^{^{Tu}}_{_{2}}\}$.
Moreover, we refer to a such an specific rule as ${\sf R}^{^{Tu}}_{_{1}}(t_{_{0}})$ for some $t=t_{_{0}}$.
It is also instructive to consider the special case of $t=2$ for both Tucker rules. Note that in the first case, ${\sf R}^{^{Tu}}_{_{1}}(2)$
states (the well-known and simple rule) that, in any coloring problem, if one end of an edge has a fixed color, then this color may be excluded from the list of the other end of this edge. This can be considered as the most natural coloring rule coming directly from the definition of a {\it proper coloring} of a graph, and consequently, from now on, we assume that  ${\sf R}^{^{Tu}}_{_{1}}(2)$ is contained in any coloring rule-base.\\
Also, ${\sf  R}^{^{Tu}}_{_{2}}(2)$ states that in a $2$-coloring of a path of length two, both vertices of degree one must
have the same color.
\begin{defin}{
A {\it $t$-coloring rule-base} ${\cal R}$, with $r \geq 0$ {\it
structural rules} and the  {\it nonstructural rule} ${\sf R}^*$, is
a finite set
$${\cal R} \isdef \{{\sf
R}_{_{1}},{\sf R}_{_{2}},\ldots,{\sf R}_{_{r}}\} \cup
\{{\sf R}^*\}$$ such that,
\begin{itemize}
\item{For each $1 \leq i \leq r$, ${\sf R}_{_{i}}$ is a local $t$-coloring rule on a base-graph $\matr{H}_{_{i}}$.}
\item{The nonstructural rule ${\sf R}^*$ is a coloring procedure to change a list $L \subseteq \{1,2,\ldots,t\}$ to a list
$L' \subseteq L$ that must be compatible with ${\sf
R}^{^{Tu}}_{_{1}}(2)$. This rule may not satisfy properties $(b)$ and $(c)$ of Definition~\ref{RULEBASE}. }
\end{itemize}
A $t$-coloring rule-base is said to be {\it $d$-bounded}, if for
every $1 \leq i \leq r$, the diameter of the base-graph
$\matr{H}_{_{i}}$, of the local rule ${\sf R}_{_{i}}$, is not
greater than the constant $d$.
A $t$-coloring rule-base is said to be {\it structural}, if it
consists, only,  of structural rules.
 }\end{defin}
\begin{exm}{A standard example for the nonstructural coloring rule ${\sf R}^*$,
is the {\it greedy} coloring rule, ${\sf R}^{^{Gr}}$, which is
defined as follows,
\begin{equation}
{\sf R}^{^{Gr}}:     \left (A \isdef \displaystyle{\bigcup_{{x \in N(u) \ \& \ |L_{_{x}}|=1}}}\ \ L_{_{x}}  \right ) \Rightarrow \left (L_{_{u}} \leftarrow \{\min(L_{_{u}}-A)\} \right ).
\end{equation}
Hereafter, ${\cal R}_{_{G}}  \isdef \{{\sf R}^{^{Gr}}\}$.
}\end{exm}

\begin{defin}{
If ${\cal R}$ is a $d$-bounded $t$-coloring rule-base, then the list
coloring problem $(\matr{G}[n],{\cal L},t)$ is said to be {\it
$({\cal R},d,r)$-solvable} if the following algorithm,
\begin{center}
 {\bf OLG-${\cal R}$-Col}: $(\matr{G}[n],{\cal L},t,d,rounds,Done)$,
\end{center}
 ends
up with a $t$-coloring of $\matr{G}[n]$ as a solution with
$Done=true$, $rounds < r$, where $rounds$ returns the number of
rounds that the algorithm has scanned the graph and the {\it
returned} list assignment ${\cal L}$ is a proper coloring of
$\matr{G}[n]$ such that for any
vertex $v$ we have $|L_{_{v}}|=1$.\\
\ \\
\begin{tabular}{l}
{\bf OLG-${\cal R}$-Col}: $(\matr{G}[n],{\cal L},t,d,rounds,Done)$\\
\ \\
\begin{tabular}{ll}
\hfill $0)$& $rounds:=0, Done:=True $, \\
\hfill $1)$&$colored:=True$,\\
\hfill $2)$&{\bf For} $i$ from $1$ to $n$ do\\
&\quad  {\bf Local-${\cal R}$-Update}$(\matr{G}[n],{\cal L},t,v_{_{i}},d,Done,Col)$,\\
&\quad {\bf If } (NOT $Done$) {\bf Then} Halt and write(Failure), \\
&\quad {\bf If } (NOT $Col$) {\bf Then} $colored:=False$, \\
&{\bf End} (For),\\
\hfill $3)$& $rounds:=rounds+1$,\\
\hfill $4)$& {\bf If}  (NOT $colored$) {\bf Then} goto: $(1)$\\
&{\bf Else} Halt and write the coloring,
\end{tabular}\\ \ \\
{\bf End}\ (OLG-${\cal R}$-Col).\\
\end{tabular}\\ \ \\
The procedure {\bf Local-${\cal R}$-Update} locally updates the
lists according to the rule-base of the algorithm and, moreover, we
assume that this is done in a {\it canonical order}. In more
details we have,\\ \ \\
\begin{tabular}{l}
{\bf Local-${\cal R}$-Update}: $(\matr{G}[n],{\cal L},t,v,d,Done,Col)$\\
\ \\
\begin{tabular}{ll}

\hfill $0)$&$Col:=True$,\\
\hfill $1)$&Focus on a $d$-neighborhood of $v$ in
$\matr{G}$ and update the list of $v$ using\\
& the local rules (${\sf R}_{_{i}}$'s) in ${\cal R}$ and all subgraphs of the neighborhood\\
& (in a canonical ordering).\\
\hfill $2)$& Update the list of $v$ using ${\sf R}^*$ (if any such rule exists in the rule-base),\\
\hfill $3)$&{\bf If} $|L(v)|=0$ {\bf Then} $Done:=False$,\\
\hfill $4)$&{\bf If} $|L(v)| \not = 1$ {\bf Then} $Col:=False$,\\
\end{tabular}\\ \ \\
{\bf End}\ (Local-${\cal R}$-Update).
\end{tabular}
 \\ \ \\
 A list coloring problem $(\matr{G}[n],{\cal L},t)$ is said to be
{\it $({\cal R},d)$-solvable} if it is $({\cal
R},d,\infty)$-solvable (which means that it is $({\cal
R},d,r)$-solvable for some natural number $r$).
 }\end{defin}
\subsection{Sequential defining sets}
\begin{defin}{
Given a $t$-coloring $\gamma$ of a graph  $\matr{G}$, and a subset
$A \subset V(G)$ then the list assignment ${\cal L}^{^{A,\gamma}}$ is
defined as follows,
$${\cal L}^{^{A,\gamma}}_{_{v}} \isdef \left \{ \begin{array}{ll}
\{\gamma(v)\} & v \in A \\
\{1,2,\ldots,t\} & v \not \in A. \end{array}\right. $$ Given an
ordered colored-graph $\matr{G}[n,\gamma]$, a subset $A \in V(G)$ is
called an {\it sequential ${\cal R}$-defining set} (or an
$\sds_{_{{\cal R}}}$ for short) for $\matr{G}[n,\gamma]$ with
respect to the  $d$-bounded $t$-coloring rule-base ${\cal R}$, if
the list coloring problem $(\matr{G}[n],{\cal L}^{^ {A,\gamma}},t)$
is $({\cal R},d)$-solvable, and returns the coloring $\gamma$.
 In that case, the number
$$\iota_{_{\matr{G}[n,\gamma],{\cal R}}}(A) \isdef |A|+rounds-1 $$ is called the {\it index} of $A$ with respect to $\matr{G}[n,\gamma]$.\\
The subset $A \subseteq V(G)$ is called a {\it weak minimum sequential ${\cal
R}$-defining set} (or a $\Wsds_{_{{\cal R}}}$) for
$\matr{G}[n,\gamma]$ with respect to the  $d$-bounded $t$-coloring
rule-base ${\cal R}$ (or a $\Wsds(\matr{G}[n],\gamma,{\cal R})$ for short), if $A$ is an $\sds_{_{{\cal R}}}$   for $\matr{G}[n,\gamma]$
of minimum size.\\
Also, a {\it strong minimum sequential ${\cal R}$-defining set} (or a $\Ssds_{_{{\cal R}}}$) for $\matr{G}[n,\gamma]$ with
respect to the  $d$-bounded $t$-coloring rule-base ${\cal R}$ (or a $\Ssds(\matr{G}[n],\gamma,{\cal R})$ for short),
is an $\sds_{_{{\cal R}}}$   for $\matr{G}[n,\gamma]$ of minimum  index. The minimum numbers are called
the {\it weak sequential defining number} and the {\it strong sequential  defining number}
of $\matr{G}[n,\gamma]$ with respect to the  $d$-bounded
$t$-coloring rule-base ${\cal R}$, and are denoted by
$\Wsdn(\matr{G}[n],\gamma,{\cal R})$ and $\Ssdn(\matr{G}[n],\gamma,{\cal
R})$, respectively.\\
On the other hand, the bounded versions of these parameters are
defined similarly as the minimum numbers of the size or index,
respectively, on the space of all {\it $k$-bounded   sequential
${\cal R}$-defining   sets}, i.e. the subsets $A$ for which the
sequential coloring algorithm {\bf OLG-${\cal
R}$-Col}$(\matr{G}[n],{\cal L}^{^ A},t,d,rounds,Done)$ succeeds and
ends with $Done=true$ and $rounds \leq k$. These numbers are denoted
as $\Wsdn^{k}(\matr{G}[n],\gamma,{\cal R})$ and
$\Ssdn^{k}(\matr{G}[n],\gamma,{\cal R})$, respectively. Also,
$\sds^{k}_{_{{\cal R}}}$, $\Wsds^{k}_{_{{\cal R}}}$ and
$\Ssds^{k}_{_{{\cal R}}}$ are defined accordingly.
 }\end{defin} In
what follows we consider some basic properties of these concepts.
\begin{pro}\label{ALGDET}
Given any ordered $t$-colored graph $\matr{G}[\varrho,\gamma]$, any
$d$-bounded $t$-coloring rule-base ${\cal R}$, and $A \subset V(G)$,
if the list coloring problem $(\matr{G}[\varrho],{\cal
L}^{^{A,\gamma}},t)$ is $({\cal R},d)$-solvable then it is also an
$({\cal R},d,\|{\cal L}^{^{A,\gamma}}\|-|V(\matr{G})|)$-solvable
problem.
\end{pro}
\begin{proof}{Clearly, if the algorithm does not stop after $\|{\cal L}^{^{A,\gamma}}\|-|V(\matr{G})|$ rounds, it means that there has been a round
during which no list is changed and, since {\bf OLG-${\cal R}$-Col}
is a deterministic algorithm, this means that after this round no list can be  changed.
}\end{proof}
\begin{pro}
Given any $t$-colored graph $\matr{G}[\gamma]$,
any $d$-bounded $t$-coloring structural rule-base ${\cal R}$, with
two orderings $\varrho_{_{1}}$ and $\varrho_{_{2}}$ for $\matr{G}$,
if $A$ is a $\Wsds_{_{{\cal R}}}$ for $\matr{G}[\varrho_{_{1}},\gamma]$, then $A$ is also
a $\Wsds_{_{{\cal R}}}$ for $\matr{G}[\varrho_{_{2}},\gamma]$.
\end{pro}
\begin{proof}{
Let $S \isdef ((t_{_{i}},v_{_{i}},\matr{X}_{_{i}}))_{_{i \in \{1,2,\ldots,s\}}}$
be the sequence of vertices, as $v_{_{i}}$, along with their $d$-neighborhoods
$\matr{X}_{_{i}}$ (as an ordered list-graph), such that
$v_{_{i}}$ is the $i$'th vertex whose list is changed (made smaller) by the algorithm
\begin{center}
{\bf OLG-${\cal R}$-Col}$(\matr{G}[\varrho_{_{1}}],{\cal L}^{^
{A,\gamma}},t,d,rounds,Done).$
\end{center}
at time $t_{_{i}}$.
Note that this is a well-defined sequence since by the hypothesis at least one list is changed in each round
(or the algorithm will fall in a loop-state which is a contradiction).\\
Consider the same sequence $S' \isdef ((t'_{_{i}},v'_{_{i}},\matr{X}'_{_{i}}))_{_{i \in \{1,2,\ldots,s'\}}}$ for the algorithm
\begin{center}
{\bf OLG-${\cal R}$-Col}$(\matr{G}[\varrho_{_{2}}],{\cal L}^{^
{A,\gamma}},t,d,rounds,Done).$
\end{center}
Let $f$ be the increasing function that presents the best-fit embedding of $S$ in $S'$, i.e.
 $f(i)$ is the smallest index in the sequence $S'$ such that $v'_{_{f(i)}}=v_{_{i}}$.
 Now since the rules in ${\cal R}$ does not change the set of possible colorings of the neighborhood structure they are applied to
 (and consequently does not change the set of possible colorings of the whole graph), each list of the graph $\matr{X}'_{_{f(i)}}$
 is a subsets of the corresponding list in the graph $\matr{X}_{_{i}}$ for each $i$. This proves that the second algorithm
 for $\matr{G}[\varrho_{_{2}}]$ will eventually reach the same coloring $\gamma$ but possibly in a longer period of time bounded by
 the function $t'_{_{f(i)}}$.
 }\end{proof}
  It is important to note that the condition of being
structural for the rule-base in Proposition~\ref{ALGDET} can not be
removed since, for instance, on
$\matr{K}_{_{3}}[v_{_{1}},v_{_{2}},v_{_{3}}]$ with $v_{_{1}}$
colored red one can easily see that by reversing the order on the
other two vertices the whole coloring is changed when using the
greedy sequential algorithm.
\begin{pro}\label{basics}
Given any ordered $t$-colored graph $\matr{G}[n,\gamma]$, and any
$d$-bounded
 \linebreak
$t$-coloring rule-base ${\cal R}$, with
$\Ssdn(\matr{G}[n],\gamma,{\cal R}) < \infty$, then
\begin{itemize}
\item[{\rm a)}]{ $\forall\ k \geq 1 \quad \Wsdn^{k}(\matr{G}[n],\gamma,{\cal R})
\leq \Ssdn^{k}(\matr{G}[n],\gamma,{\cal R})$.}

\item[{\rm b)}]{A subset $A$ is a $\Ssds^{1}_{_{{\cal R}}}$ for $\matr{G}[n,\gamma]$
if and only if it is a $\Wsds^{1}_{_{{\cal R}}}$, and consequently,
$\Ssdn^{1}(\matr{G}[n],\gamma,{\cal R})=\Wsdn^{1}(\matr{G}[n],\gamma,{\cal R})$. }

\item[{\rm c)}]{If  $1 \leq i \leq j$ then
$$ \Ssdn(\matr{G}[n],\gamma,{\cal R}) \leq \Ssdn^{j}(\matr{G}[n],\gamma,{\cal R})
\leq \Ssdn^{i}(\matr{G}[n],\gamma,{\cal R}) \leq n.$$
Moreover, the upper bound is tight.}

\item[{\rm d)}]{If  $1 \leq i \leq j$ then
$$\Wsdn(\matr{G}[n],\gamma,{\cal R}) \leq \Wsdn^{j}(\matr{G}[n],\gamma,{\cal R})
\leq \Wsdn^{i}(\matr{G}[n],\gamma,{\cal R}).$$
Moreover, if ${\cal R}$ is a structural rule-base then,
$$t-1 \leq \Wsdn(\matr{G}[n],\gamma,{\cal R})$$
and the lower bound is tight.}
\end{itemize}
\end{pro}
\begin{proof}{\ \\
\begin{itemize}
\item[{\rm a)}]{ If $A$ is a $\Wsds^{k}_{_{{\cal R}}}$ for $\matr{G}[n,\gamma]$, then
the list coloring problem $(\matr{G}[n],{\cal L}^{^ {A,\gamma}},t)$
is $({\cal R},d,k+1)$-solvable.
 Any other set $B$ that is an
$\Ssds^{k}_{_{{\cal R}}}$ for $\matr{G}[n,\gamma]$ must have a
greater cardinality than $A$ due to the definition of
$\Wsds^{k}_{_{{\cal R}}}$, since it takes at least $1$ round for
\begin{center}
{\bf
OLG-${\cal R}$-Col}$(\matr{G}[n],{\cal L}^{^
{B,\gamma}},t,d,rounds,Done)$
\end{center}
to be finished and $\iota_{_{\matr{G}[n,\gamma],{\cal R}}}(B)=|B|+rounds-1$.
}
\item[{\rm b)}]{
If $A$ is a $\Ssds^{1}_{_{{\cal R}}}$ for $\matr{G}[n,\gamma]$ then
the list coloring problem $(\matr{G}[n],{\cal L}^{^ {A,\gamma}},t)$
is $({\cal R},d)$-solvable in exactly one round. Therefore,
$$\iota_{_{\matr{G}[n,\gamma],{\cal R}}}(A)=|A|+rounds-1=|A|$$ and
from the definition, one deduces that $A$ is also a
$\Wsds^{1}_{_{{\cal R}}}$. The converse implication follows in the
same way. }
\item[{\rm c)}]{If $A$ is an $\Ssds^{k}_{_{{\cal R}}}$ for $\matr{G}[n,\gamma]$ then
the list coloring problem $(\matr{G}[n],{\cal L}^{^ {A,\gamma}},t)$
is $({\cal R},d,k+1)$-solvable. Also, note that for the empty graph
of size $n$ we have
$$\Ssdn^{1}(\overline{\matr{K}}_{_{n}}[n],\gamma,{\cal R}_{_{T}}) = n,$$
with respect to any $t$-coloring $\gamma$.
 }
\item[{\rm d)}]{Let $A$ be a $\Wsds_{_{{\cal R}}}$
for $\matr{G}[n,\gamma]$ and the structural rule-base ${\cal R}$. Then, by definition $\matr{G}[n,{\cal L}^{^
{A,\gamma}}]$ is a uniquely list $t$-colorable graph and
consequently, $|A| \geq t-1$. \\
Also, $\Wsdn(\matr{K}_{_{3}}[n],\gamma,{\cal R}_{_{T}})=2$ for any $3$-coloring $\gamma$ which shows that the inequality is sharp.}
\end{itemize}
}\end{proof}

\begin{defin} { \label{DnGraph}
For each $k \geq 1$ the ordered marked graph
$\matr{D}_{_{k}}[\varrho][u,v]$ is defined as follows (see
Figure~\ref{GraphD}). For $k=1$ we define,
\begin{equation}
\begin{array}{ll}
\matr{D}_{_{1}}[\varrho][u,v,z]& \isdef
\matr{\varepsilon}[u,z]+\matr{\varepsilon}[v,z],
\end{array}
\end{equation}
with
$$ \varrho \isdef \left \{ \begin{array}{ll}
\varrho(u)=1,\\
\varrho(v)=2,  \\
\varrho(z)=3,
\end{array} \right. $$
and for $k \geq 2$,
\begin{equation}
\begin{array}{ll}
\matr{D}_{_{k}}[\varrho][u,v]&\isdef \matr{\varepsilon}[u,z]+ \matr{\varepsilon}[u,x_{_{k}}]+\matr{\varepsilon}[v,x_{_{k}}]+
\matr{\varepsilon}[z,x_{_{1}}]\\
&\\
&\ + \ \displaystyle{\sum^{{k-1}}_{{i=1}}}\ \matr{K}_{_{3}}[x_{_i},v_{_{1,i}},v_{_{2,i}}]+\matr{K}_{_{3}}[x_{_{i+1}},v_{_{1,i}},v_{_{2,i}}] \\
&\\
&\ + \ \displaystyle{\sum^{k-1}_{i=1}}\
{\matr{\varepsilon}[z,v_{_{1,i}}]},
\end{array}
\end{equation}
with the ordering
$$ \varrho \isdef \left \{ \begin{array}{lllll}
\varrho(u)=1, \\
\varrho(v)=2,  \\
\varrho(z)=k+3, \\
\varrho(x_{_i})=i+2 & \forall\ i, \\
\varrho(v_{_{1,i}})=k+2i+2 & \forall\ i, \\
\varrho(v_{_{2,i}})=k+2i+3 & \forall\ i.
\end{array} \right. $$
}
\end{defin}

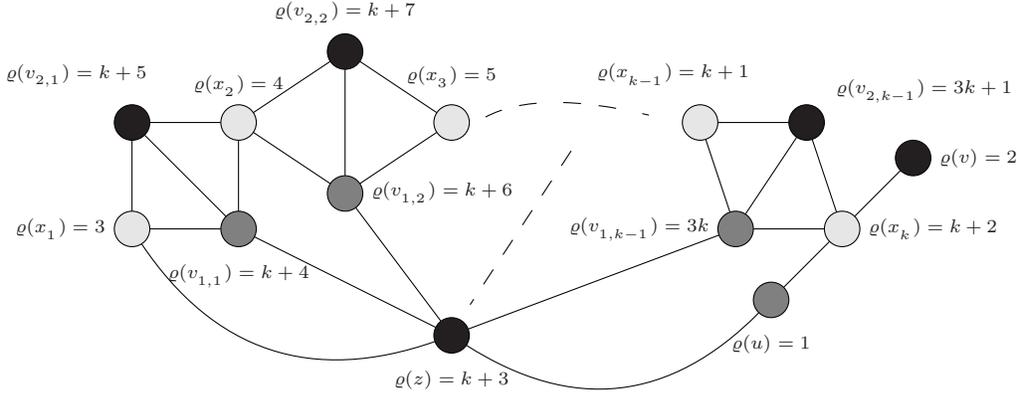
\begin{figure}[h]

\fontsize{7.5 pt}{7.5 pt}
\special{em:linewidth 1 pt} \unitlength 1.18mm \linethickness{0.5pt}
\begin{picture}(110,48)(0,-48)
\put(0,-122){}
\gasset{Nw=4.0,Nh=4.0,Nmr=2.0,Nfill=y,ExtNL=y,NLdist=1.0,AHnb=0}

\node[Nfill=y,fillcolor=Black,ExtNL=y,NLangle=0.0,NLdist=1.0](n4)(104.0,-16.0){$\varrho(v)=2$}

\node[Nfill=y,fillgray=0.5,ExtNL=y,NLangle=-90.0,NLdist=2.0](n5)(88.0,-32.0){$\varrho(u)=1$}

\node[Nfill=y,fillgray=0.9,ExtNL=y,NLangle=0.0,NLdist=1.0](n6)(96.0,-24.0){$\varrho(x_{_{k}})=k+2$}

\node[Nfill=y,fillgray=0.9,ExtNL=y,NLangle=120.0,NLdist=2.0](n7)(80.0,-12.0){$\varrho(x_{_{k-1}})=k+1$}

\node[Nfill=y,fillcolor=Black,ExtNL=y,NLangle=15.0,NLdist=2](n8)(92.0,-12.0){$\varrho(v_{_{2,k-1}})=3k+1$}

\node[Nfill=y,fillgray=0.5,ExtNL=y,NLangle=180.0,NLdist=1.0](n9)(84.0,-24.0){$\varrho(v_{_{1,k-1}})=3k$}

\node[Nfill=y,fillcolor=Black,ExtNL=y,NLangle=140.0,NLdist=2.0](n10)(16.0,-12.0){$\varrho(v_{_{2,1}})=k+5$}

\node[Nfill=y,fillgray=0.9,ExtNL=y,NLdist=1.0](n11)(28.0,-12.0){$\varrho(x_{_{2}})=4$}

\node[Nfill=y,fillgray=0.5,ExtNL=y,NLangle=-90.0,NLdist=2.0](n12)(28.0,-24.0){$\varrho(v_{_{1,1}})=k+4$}

\node[Nfill=y,fillgray=0.9,ExtNL=y,NLangle=180.0,NLdist=1.0](n13)(16.0,-24.0){$\varrho(x_{_{1}})=3$}

\node[Nfill=y,fillcolor=Black,ExtNL=y,NLangle=-90.0,NLdist=2.0](n14)(52.0,-36.0){$\varrho(z)=k+3$}

\node[Nfill=y,fillcolor=Black,ExtNL=y,NLangle=90.0,NLdist=1.0](n15)(40.0,-4.0){$\varrho(v_{_{2,2}})=k+7$}

\node[Nfill=y,fillgray=0.5,ExtNL=y,NLangle=0.0,NLdist=1.0](n16)(40.0,-20.0){$\varrho(v_{_{1,2}})=k+6$}

\node[Nfill=y,fillgray=0.9,ExtNL=y,NLdist=2.0](n17)(52.0,-12.0){$\varrho(x_{_{3}})=5$}

\gasset{AHnb=0} \drawedge(n10,n12){}

\drawedge(n5,n6){}

\drawedge(n4,n6){ }

\drawedge(n8,n6){}

\drawedge(n9,n6){}

\drawedge(n7,n8){}

\drawedge(n7,n9){}

\drawedge(n11,n12){}

\drawedge(n13,n12){}

\drawedge(n13,n10){}

\drawedge(n11,n10){}

\drawedge(n15,n16){}

\drawedge(n11,n16){}

\drawedge(n15,n11){}

\drawedge(n17,n15){}

\drawedge(n17,n16){}

\drawedge[curvedepth=-8.0](n13,n14){}

\drawedge[curvedepth=-8.0](n14,n5){}

\drawedge(n12,n14){}

\drawedge(n16,n14){}

\drawedge(n9,n14){}

\drawedge(n8,n9){}
\drawqbezier[dash={2.0 2.0 2.0
3.0}{0.0}](55.83,-11.38,61.82,-8.2,74.23,-11.17)
\drawline[dash={2.0 2.0 2.0 2.0}{0.0}](54.09,-32.54)(65.47,-15.2)
\end{picture}
\caption{\protect\label{GraphD} The graph
$\matr{D}_{_{k}}[\varrho][u,v]$.  }
\end{figure}
\begin{defin}{
\label{CCK} The $k$'th {\it coloring closure} of a subset
$A$ of $\matr{G}[\varrho,\gamma]$, with respect to a $d$-bounded
$t$-coloring rule-base ${\cal R}$, denoted by
$CC^{k}(A,\matr{G}[\varrho,\gamma],{\cal R})$, is defined to be the
induced subgraph $\matr{H}[\tilde{\varrho},\tilde{\gamma}]$ of
$\matr{G}[\varrho,\gamma]$ with the induced ordering
$\tilde{\varrho}$, and the induced coloring $\tilde{\gamma}$, on
all vertices that has a list of size one, when the algorithm
\begin{center}
{\bf OLG-${\cal R}$-Col}$(\matr{G}[\varrho,\gamma],{\cal L}^{^
{A,\gamma}},t,d,rounds,Done)$
\end{center}
is forced to stop after $k$ rounds (if it has not already halted).
 }\end{defin}
The following corollary is a direct consequence of the definition which is stated to be used and referred to
in later applications.
\begin{cor}\label{CCK_COL}
Given any $d$-bounded $t$-coloring rule-base ${\cal R}$, and a
graph $\matr{G}[\varrho,\gamma]$,
\begin{itemize}
\item[{\rm a)}]{$CC^{\infty}(A,\matr{G}[\varrho,\gamma],{\cal R})$ is well-defined.}
\item[{\rm b)}]{Let $k \geq 1$ be an integer and let $V(\matr{G})$ be $2$-partitioned as $V(\matr{G})=V(\matr{G}_{_{1}})\cup V(\matr{G}_{_{2}})$
with $V(\matr{G}_{_{1}}) \cap V(\matr{G}_{_{2}})=\emptyset$, where
$\matr{G}_{_{1}}[\varrho_{_{1}},\gamma_{_{1}}]$ and
$\matr{G}_{_{2}}[\varrho_{_{2}},\gamma_{_{2}}]$ are induced
subgraphs on $V(\matr{G}_{_{1}})$ and $V(\matr{G}_{_{2}})$ with the
induced orderings and colorings, respectively. Moreover, assume
that,

   \begin{itemize}
     \item[{\rm $\star$)}]{If $A$ is either a $\Wsds^{k}(\matr{G_{_{1}}}[\varrho_{_{1}}],\gamma_{_{1}},{\cal R})$,
     or a $\Wsds^{k}(\matr{G_{_{2}}}[\varrho_{_{2}}],\gamma_{_{2}},{\cal
     R})$,
     then as induced subgraphs
                  $$CC^{k}(A,\matr{G}[\varrho,\gamma],{\cal R})\neq \matr{G},$$}
   \end{itemize}
then,
$$\Wsdn^{k}(\matr{G }[n],\gamma,{\cal R}) > \max \{\Wsdn^{k}(\matr{G_{_{1}} }[n],\gamma,{\cal R}),
\Wsdn^{k}(\matr{G_{_{2}} }[n],\gamma,{\cal R}) \}.$$
 }
\end{itemize}
\end{cor}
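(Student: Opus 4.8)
The two parts call for different techniques, so I would prove them separately. For part (a) the point is that every run of {\bf OLG-${\cal R}$-Col} drives the list assignment monotonically downward: property (a) of a local rule forces $L'\subseteq L$ at each update, and ${\sf R}^{*}$ is likewise required to shrink lists, so after $0,1,2,\dots$ forced rounds one obtains a non-increasing chain of list families inside the finite lattice of all assignments $v\mapsto L_{_{v}}\subseteq\{1,\dots,t\}$. A non-increasing chain in a finite lattice is eventually constant, and since the algorithm is deterministic the stable assignment is reached after finitely many rounds; from that point on the set of singleton-list vertices, and hence $CC^{k}(A,\matr{G}[\varrho,\gamma],{\cal R})$, no longer changes. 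That common terminal value is by definition $CC^{\infty}$, so it is well-defined. Proposition~\ref{ALGDET} even pins down when the chain freezes, giving $CC^{\infty}=CC^{k_{0}}$ with $k_{0}=\|{\cal L}^{^{A,\gamma}}\|-|V(\matr{G})|$.

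For part (b) I would first record two monotonicity properties of the closure, both consequences of property (c). \emph{Defining-set monotonicity:} if $A\subseteq A'$ then ${\cal L}^{^{A',\gamma}}\subseteq{\cal L}^{^{A,\gamma}}$, and feeding these two comparable inputs into the deterministic algorithm while applying (c) round by round keeps the $A'$-lists below the $A$-lists throughout, so $CC^{k}(A,\cdot)\subseteq CC^{k}(A',\cdot)$ for all $k$. \emph{Induced-extension monotonicity:} because $\matr{G}_{_{1}}$ sits inside $\matr{G}$ as an induced subgraph, any local rule that fires at a vertex of $V(\matr{G}_{_{1}})$ in the standalone run on $\matr{G}_{_{1}}$ acts on a $d$-neighborhood that reappears among the subgraphs scanned at that vertex inside $\matr{G}$; an induction on rounds then yields $CC^{k}(B,\matr{G}_{_{1}})\subseteq CC^{k}(B,\matr{G})$ for every $B\subseteq V(\matr{G}_{_{1}})$, i.e.\ the ambient graph can only help.

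Now I would argue (b) by contradiction. Write $m_{i}=\Wsdn^{k}(\matr{G}_{_{i}}[n],\gamma,{\cal R})$, assume $\Wsdn^{k}(\matr{G}[n],\gamma,{\cal R})\leq m:=\max\{m_{1},m_{2}\}$, say $m=m_{1}$, and pick an optimal $A$ with $|A|=\Wsdn^{k}(\matr{G})\leq m_{1}$ and $CC^{k}(A,\matr{G})=\matr{G}$. The contrapositive of ($\star$) is the engine: since $CC^{k}(A,\matr{G})=\matr{G}$, the set $A$ is neither a $\Wsds^{k}$ of $\matr{G}_{_{1}}$ nor of $\matr{G}_{_{2}}$. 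In particular, were $A$ a $k$-bounded defining set for the standalone graph $\matr{G}_{_{1}}$, minimality of $m_{1}$ would give $|A|\geq m_{1}$, hence $|A|=m_{1}$ and $A$ a genuine $\Wsds^{k}$ of $\matr{G}_{_{1}}$, contradicting ($\star$). Thus $A$ does not color $\matr{G}_{_{1}}$ on its own, and by induced-extension monotonicity the only way $A$ can still color all of $V(\matr{G}_{_{1}})$ inside $\matr{G}$ is through colors forced across the cut from $V(\matr{G}_{_{2}})$; the symmetric statement holds for $\matr{G}_{_{2}}$. Following these cross-forced colors back to the part that produced them and using defining-set monotonicity to enlarge the corresponding restriction of $A$ into a minimum single-part defining set whose $\matr{G}$-closure is still all of $\matr{G}$, one lands on a set that violates ($\star$); the inequality is strict rather than weak because the remaining part must still supply its own nonempty quota, $m_{2}\geq 1$ (indeed $m_{2}\geq t-1$ for a structural rule-base, by Proposition~\ref{basics}(d)).

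The main obstacle is precisely this last step: disentangling the colorings of $\matr{G}_{_{1}}$ and $\matr{G}_{_{2}}$ when edges run between them. As a sanity check, in the degenerate case where the partition is an honest disjoint union the run on $\matr{G}$ splits as $CC^{k}(A,\matr{G})=CC^{k}(A_{1},\matr{G}_{_{1}})\,\sqcup\,CC^{k}(A_{2},\matr{G}_{_{2}})$ with $A_{i}=A\cap V(\matr{G}_{_{i}})$; then $CC^{k}(A,\matr{G})=\matr{G}$ forces each $A_{i}$ to be a $k$-bounded defining set of $\matr{G}_{_{i}}$, whence $|A|=|A_{1}|+|A_{2}|\geq m_{1}+m_{2}>m$, and ($\star$) holds automatically. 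Hypothesis ($\star$) is exactly the device that rescues this decomposition once cross edges are allowed: it certifies that no minimum defining set of a single part can by itself drive the coloring of the whole graph, so an economical defining set of $\matr{G}$ is barred from offloading one part's work onto the other and must pay strictly more than either $m_{1}$ or $m_{2}$. I expect the bookkeeping of the cross-forced colors to be the only delicate point; everything else reduces to the two monotonicity properties and to the deterministic, finite behavior of the algorithm established in part (a).
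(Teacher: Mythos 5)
Your part (a) is correct and is essentially the paper's own argument: the paper simply invokes Proposition~\ref{ALGDET} to conclude $CC^{\infty}(A,\matr{G}[\varrho,\gamma],{\cal R})=CC^{r_{0}}(A,\matr{G}[\varrho,\gamma],{\cal R})$ with $r_{0}=\|{\cal L}^{^{A,\gamma}}\|-|V(\matr{G})|$, and your monotone-chain-in-a-finite-lattice argument is exactly the content of that proposition (indeed slightly more careful, since it covers the case where the algorithm never succeeds).

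Part (b) contains a genuine gap, and it sits exactly where you admitted it would. The hypothesis $(\star)$ speaks only about sets that are \emph{themselves} minimum $k$-bounded defining sets of a single part (a $\Wsds^{k}$ of $\matr{G}_{_{1}}$ or of $\matr{G}_{_{2}}$), whereas your adversary set $A$, with $|A|\leq\max\{m_{1},m_{2}\}$ and $CC^{k}(A,\matr{G})=\matr{G}$, may straddle the cut. Your bridge --- ``following cross-forced colors back'' and ``enlarging the corresponding restriction of $A$ into a minimum single-part defining set whose $\matr{G}$-closure is still all of $\matr{G}$'' --- fails for two concrete reasons. First, a $\Wsds^{k}$ of $\matr{G}_{_{1}}$ has size exactly $m_{1}$, and there is no exchange or extension structure on $k$-bounded defining sets: $A\cap V(\matr{G}_{_{1}})$ need not be contained in \emph{any} minimum defining set of $\matr{G}_{_{1}}$, so ``enlarging into a minimum one'' is not an available operation. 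Second, even if some $\Wsds^{k}$ set $B_{1}$ of $\matr{G}_{_{1}}$ with $B_{1}\supseteq A\cap V(\matr{G}_{_{1}})$ existed, your defining-set monotonicity would only give $CC^{k}\bigl(B_{1}\cup(A\cap V(\matr{G}_{_{2}})),\matr{G}\bigr)=\matr{G}$; to contradict $(\star)$ you need $CC^{k}(B_{1},\matr{G})=\matr{G}$ for $B_{1}$ \emph{alone}, and the vertices of $A$ in the other part cannot be discarded, since they may be precisely what powers the cross-forcing. The mutual bootstrapping across the cut (partial colorings of the two parts alternately forcing one another) is exactly what $(\star)$ does not directly control, so closing this requires a new idea, not bookkeeping. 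A further caveat: both of your monotonicity lemmas rest on property (c) of Definition~\ref{RULEBASE}, which the nonstructural rule ${\sf R}^{*}$ of a general rule-base is explicitly allowed to violate, while the corollary is stated for an arbitrary $d$-bounded rule-base. For fairness, note that the paper itself offers no argument here --- it says part (b) ``follows from the definitions and the hypothesis $(\star)$'' --- so there is no official proof to measure your completion against; but as written, your part (b) is a sketch with an acknowledged hole rather than a proof.
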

\begin{proof}{For part $(a)$ note that by Proposition~\ref{ALGDET} if $r_{_{0}} \isdef \|{\cal L}^{^{A,\gamma}}\|-|V(\matr{G})|$ we have,
$$ CC^{\infty}(A,\matr{G}[\varrho,\gamma],{\cal R})=CC^{r_{_{0}}}(A,\matr{G}[\varrho,\gamma],{\cal R}). $$
Part $(b)$ follows from the definitions and the hypothesis  $(\star)$.
 }\end{proof}
\begin{thm}\label{WSDIF}
For any given integer $k \geq 2$ there exist an ordered colored graph
$\matr{G}[\varrho,\gamma]$ and a rule-base ${\cal R}$ such that,
$$\Ssdn^{k}(\matr{G}[\varrho],\gamma,{\cal R}) \not = \Wsdn^{k}(\matr{G}[\varrho],\gamma,{\cal R}).$$
\end{thm}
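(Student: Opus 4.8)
The plan is to realize the gap on the ordered marked graph $\matr{D}_{_{k}}[\varrho][u,v]$ of Definition~\ref{DnGraph}, taking the structural rule-base ${\cal R}={\cal R}_{_{T}}$, $t=3$, and the proper $3$-coloring $\gamma$ given by $\gamma(x_{_{i}})=1$ for every $i$, $\gamma(u)=\gamma(v_{_{1,i}})=3$ and $\gamma(z)=\gamma(v)=\gamma(v_{_{2,i}})=2$. I will show that $\Wsdn^{k}(\matr{D}_{_{k}}[\varrho],\gamma,{\cal R}_{_{T}})=2$ while $\Ssdn^{k}(\matr{D}_{_{k}}[\varrho],\gamma,{\cal R}_{_{T}})=3$, which already separates the two parameters.

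First I would record the rigidity of $\gamma$. For each $i$ the two triangles on $\{x_{_{i}},v_{_{1,i}},v_{_{2,i}}\}$ and $\{x_{_{i+1}},v_{_{1,i}},v_{_{2,i}}\}$ form exactly a copy of the base-graph $\matr{T}_{_{3}}^{^{2}}[x_{_{i}},x_{_{i+1}},v_{_{1,i}},v_{_{2,i}}]$ of ${\sf R}^{^{Tu}}_{_{2}}$, so that rule forces $x_{_{i}}$ and $x_{_{i+1}}$ to share a color; consequently all the $x_{_{i}}$ carry one common color, $u$ and $z$ take the remaining two, and $v_{_{1,i}},v_{_{2,i}}$ are then forced as in $\gamma$. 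The one genuinely free vertex is $v$, whose only neighbour is $x_{_{k}}$, so $v$ belongs to every $\sds_{_{{\cal R}}}$ for $\matr{D}_{_{k}}[\varrho,\gamma]$.

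For the weak bound I would take $A_{_{0}}=\{u,v\}$. Since $\gamma(u)\neq\gamma(v)$, rule ${\sf R}^{^{Tu}}_{_{1}}(3)$ fixes $x_{_{k}}$ in the first round; because $\varrho$ scans the chain in the order $x_{_{1}},\ldots,x_{_{k}}$ and updates $x_{_{i}}$ before $x_{_{i+1}}$, the fixed color can only travel \emph{backwards} along the chain, one vertex per round via ${\sf R}^{^{Tu}}_{_{2}}$, so that $x_{_{1}}$ — and in the same scan $z$ and all the $v_{_{j,i}}$ — is reached precisely in round $k$. Hence $A_{_{0}}$ is an $\sds^{k}_{_{{\cal R}}}$ returning $\gamma$, which together with the lower bound $t-1=2\leq\Wsdn$ for the structural rule-base ${\cal R}_{_{T}}$ (Proposition~\ref{basics}(d)) gives $\Wsdn^{k}=2$. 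Seeding the first-scanned end of the chain instead removes the delay: for $A_{_{1}}=\{x_{_{1}},u,v\}$ the rule ${\sf R}^{^{Tu}}_{_{2}}$ now propagates \emph{forwards} $x_{_{1}}\to x_{_{2}}\to\cdots\to x_{_{k}}$ inside a single scan, after which $z$ and every $v_{_{j,i}}$ are fixed by ${\sf R}^{^{Tu}}_{_{1}}$ in that same round; thus $A_{_{1}}$ finishes in one round with index $|A_{_{1}}|+1-1=3$, so $\Ssdn^{k}\leq 3$.

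The hard part is the matching lower bound $\Ssdn^{k}\geq 3$, i.e. that no defining set has index $2$. Since every $\sds_{_{{\cal R}}}$ satisfies $|A|\geq t-1=2$, such a set would have $|A|=2$ and finish in a single round. Any one-round set must determine $x_{_{1}}$ during the first scan, but the only neighbours of $x_{_{1}}$ are $z,v_{_{1,1}},v_{_{2,1}},x_{_{2}}$, all of index larger than that of $x_{_{1}}$, so no rule can shrink $L_{_{x_{_{1}}}}$ before $x_{_{1}}$ is visited; therefore $x_{_{1}}\in A$. With $v\in A$ this forces $A=\{x_{_{1}},v\}$, and I would finish by checking that this set is not a defining set at all: once $\gamma(x_{_{1}})$ and $\gamma(v)$ are fixed the colors of $u$ and $z$ remain interchangeable (together with the swap $v_{_{1,i}}\leftrightarrow v_{_{2,i}}$), so $\matr{D}_{_{k}}[\varrho,{\cal L}^{^{\{x_{_{1}},v\},\gamma}}]$ is not uniquely $3$-colorable and the algorithm cannot return $\gamma$. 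Hence no size-$2$ set finishes in one round, giving $\Ssdn^{k}=3\neq 2=\Wsdn^{k}$, as required.
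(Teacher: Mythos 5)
Your construction is the paper's own: the graph $\matr{D}_{_{k}}[\varrho][u,v]$, the rule-base ${\cal R}_{_{T}}$, a coloring that is the paper's $\gamma$ up to a permutation of colors, and the same witness sets ($\{u,v\}$ for the weak parameter, $\{u,v,x_{_{1}}\}$ for the strong one). Your treatment of the weak part and of the upper bound $\Ssdn^{k}\leq 3$ is sound, with one small slip: $\{u,v,x_{_{k}}\}$ is not a triangle (since $u\not\sim v$), so $x_{_{k}}$ is fixed in round one by two applications of ${\sf R}^{^{Tu}}_{_{1}}(2)$ on the edges $ux_{_{k}}$ and $vx_{_{k}}$, not by ${\sf R}^{^{Tu}}_{_{1}}(3)$; the effect is the same.

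The genuine gap is in the lower bound $\Ssdn^{k}\geq 3$, in the step ``no rule can shrink $L_{_{x_{_{1}}}}$ before $x_{_{1}}$ is visited; therefore $x_{_{1}}\in A$,'' from which you conclude $A=\{x_{_{1}},v\}$. This inference confuses ``not yet visited'' with ``having a full list'': a vertex of $A$ carries a singleton list from the start, so at the moment $x_{_{1}}$ is visited the rules may fire off any neighbor (or $T^{2}_{_{3}}$-partner) that lies in $A$, even though that vertex has larger index. Concretely, $A=\{v,x_{_{2}}\}$ defeats your claim: when $x_{_{1}}$ is visited in round one, ${\sf R}^{^{Tu}}_{_{2}}$ applied to $\matr{T}^{^{2}}_{_{3}}[x_{_{1}},x_{_{2}},v_{_{1,1}},v_{_{2,1}}]$ sets $L_{_{x_{_{1}}}}\leftarrow L_{_{x_{_{1}}}}\cap L_{_{x_{_{2}}}}$, a singleton, so $x_{_{1}}$ is determined without belonging to $A$. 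Hence the claimed dichotomy is false and the case analysis is incomplete. The theorem still survives, because the missing cases can be checked by hand: for $A=\{v,w\}$ with $w\in\{z,v_{_{1,1}},v_{_{2,1}}\}$ the list of $x_{_{1}}$ still has size two at its visit (the edge rule removes only one color and no triangle rule applies), so one round fails; and for $A=\{v,x_{_{2}}\}$ the vertex $z$ is left with list of size two after round one, and indeed $\{v,x_{_{2}}\}$ is not an $\sds$ at all, by exactly the $u\leftrightarrow z$ swap symmetry you invoke for $\{v,x_{_{1}}\}$. You need to add this enumeration (every $A=\{v,w\}$ with $w$ a neighbor of $x_{_{1}}$ or $w=x_{_{2}}$), rather than assert $x_{_{1}}\in A$; the paper instead disposes of all two-element sets at once by checking that $CC^{k}(\{v,x\},\matr{D}_{_{k}}[\varrho,\gamma],{\cal R}_{_{T}})$ never covers the whole graph within the allowed index budget.
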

\begin{proof}{
Fix $k \geq 2$ and consider the graph
$\matr{D}_{_{k}}[\varrho][u,v]$ as defined in
Definition~\ref{DnGraph} (see Figure~\ref{GraphD}). Also, consider
the $3$-coloring $\gamma$ of $\matr{D}_{_{k}}$ defined as,
$$ \gamma \isdef \left \{ \begin{array}{lllll}
\gamma(u)=3,\\
\gamma(v)=1,  \\
\gamma(z)=1, \\
\gamma(x_{_i})=2 & \forall\ i, \\
\gamma(v_{1,i})=3 & \forall\ i,  \\
\gamma(v_{2,i})=1 & \forall\ i .
\end{array} \right. $$
Let $A_{_{1}} \isdef \{u,v\}$. Then it is easy to see that the
list coloring problem $(\matr{D}_{_{k}}[\varrho],{\cal
L}^{^{A_{_{1}},\gamma}},3)$ is $({\cal R}_{_{T}},2)$-solvable
in exactly $k$ rounds with the coloring $\gamma$ as its solution.
Also, it is easy to see that the vertex $v$ can not be colored by
any one of the rules in ${\cal R}_{_{T}}$, and consequently, must be
contained in any $\sds_{_{{\cal R}_{_{T}}}}$ of $\gamma$. Hence,
$$CC^{^{\infty}}(\{v\},\matr{D}_{_{k}}[\varrho,\gamma],{\cal R}_{_{T}}) \not = \matr{D}_{_{k}}[\varrho,\gamma] \quad  \Rightarrow
\quad  \Wsdn^{k}(\matr{D}_{_{k}}[\varrho],\gamma,{\cal R}_{_{T}})=2.$$
On the other hand, note that for $A_{_{2}} \isdef \{u,v,x_{_{1}}\}$,
the list coloring problem
\linebreak
$(\matr{D}_{_{k}}[\varrho],{\cal
L}^{^{A_{_{2}},\gamma}},3)$ is $({\cal R}_{_{T}},2)$-solvable
in exactly $1$ round with the coloring $\gamma$ as its solution,
which shows that
$$\Ssdn^{k}(\matr{D}_{_{k}}[\varrho],\gamma,{\cal R}_{_{T}}) \leq 3.$$
Again, by considering the vertex $v$, it is easy to see that
$$\Ssdn^{k}(\matr{D}_{_{k}}[\varrho],\gamma,{\cal R}_{_{T}}) \geq 2,$$
and that $v$ must be in any such $\Ssds_{_{{\cal R}_{_{T}}}}$.
Moreover, it is easy to check that
$$CC^{k}(\{v,x\},\matr{D}_{_{k}}[\varrho,\gamma],{\cal R}_{_{T}}) \not = \matr{D}_{_{k}}[\varrho,\gamma],$$
for any vertex $x \in V(\matr{D}_{_{k}})$, which shows that
$$\Ssdn^{k}(\matr{D}_{_{k}}[\varrho],\gamma,{\cal R}_{_{T}})=3.$$
}\end{proof}

\begin{thm}\label{NONTRIVSPEC}
Given integers
 $\xi \geq 1$ and $k \geq 1$, there exist ordered colored graphs $\matr{G}[\varrho,\gamma]$ and
 $\matr{H}[\varrho',\gamma']$, and also  rule-bases ${\cal R}$ and  ${\cal R'}$  such that
$$\Wsdn^{k}(\matr{G}[\varrho],\gamma,{\cal R})-\Wsdn^{k+1}(\matr{G}[\varrho],\gamma,{\cal R})=\xi,$$
and
$$\Ssdn^{k}(\matr{H}[\varrho'],\gamma',{\cal R'})-\Ssdn^{k+1}(\matr{H}[\varrho'],\gamma',{\cal R'})=\xi.$$
\end{thm}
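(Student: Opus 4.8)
I would prove the two displayed equalities by two unrelated gadgets, because $\Wsdn^{j}$ is additive over disjoint unions while $\Ssdn^{j}$ is not (its index $|A|+rounds-1$ couples the size and the round count through a single ``$-1$''). In each case the mechanism is the same in spirit: build a configuration in which spending one extra round of computation frees a whole bundle of seeds.

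\textbf{The weak equality.} Let $\matr{G}$ be the disjoint union of $\xi$ copies of the gadget $\matr{D}_{_{k+1}}[\varrho][u,v]$ of Definition~\ref{DnGraph}, each carrying the ordering and the $3$-colouring $\gamma$ of the proof of Theorem~\ref{WSDIF}, ordered so that the copies are scanned consecutively (each in its own order), and take ${\cal R}={\cal R}_{_{T}}$. Reading the computation of Theorem~\ref{WSDIF} with parameter $k+1$ instead of $k$, one copy satisfies: $\{u,v\}$ is $({\cal R}_{_{T}},2)$-solvable in exactly $k+1$ rounds; the degree-one vertex $v$ lies in every $\sds_{_{{\cal R}_{_{T}}}}$; and $\{u,v,x_{_{1}}\}$ solves it in one round. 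Hence for a single copy $\Wsdn^{k+1}=2$, while $\Wsdn^{k}=3$: the upper bound comes from $\{u,v,x_{_{1}}\}$, and for the lower bound $\{u,v\}$ is the \emph{only} two-element defining set (any other $\{v,w\}$ either fails to break the colour-$\{1,3\}$ symmetry of the block $\{u,z,v_{_{1,i}},v_{_{2,i}}\}$ or fails to start the triangle chain), and it needs $k+1>k$ rounds. Because disjoint components evolve independently under {\bf OLG-${\cal R}$-Col}, a set is a $k$-round $\sds$ of $\matr{G}$ exactly when its trace on every copy is one, so $\Wsdn^{j}$ adds over the copies (the strict inequality of Corollary~\ref{CCK_COL}(b), applied one copy at a time, is the qualitative form of this). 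Thus $\Wsdn^{k}(\matr{G}[\varrho],\gamma,{\cal R})=3\xi$ and $\Wsdn^{k+1}(\matr{G}[\varrho],\gamma,{\cal R})=2\xi$, with difference $\xi$.

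\textbf{The strong equality.} Here I build a single connected gadget $\matr{H}[\varrho',\gamma']$ and a tailored rule-base ${\cal R}'$. The graph is a rigid ladder of $m+1=k+2$ layers $L_{_{0}},\dots,L_{_{m}}$, each of size $w=\xi+1$, ordered by $\varrho'$ against the direction of propagation. Using forcing structures as in Section~\ref{FORCING}, I design ${\cal R}'$ (legitimate and $d$-bounded for a constant $d$ depending on $k$) to realise a \emph{cumulative} dependency: a vertex of $L_{_{i+1}}$ can be forced only once \emph{all} of $L_{_{0}}\cup\dots\cup L_{_{i}}$ carry singleton lists, while the vertices of $L_{_{0}}$ are unforceable and must be seeded. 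The cumulative — rather than nearest-layer — character is the essential point: seeding a complete intermediate layer then advances the whole front by exactly one round, seeding a layer only partially is useless, and no ``parallel front'' can arise. Writing $f(r)$ for the least size of an $\sds$ that finishes within $r$ rounds, these facts give the rigid law $f(r)=w\,(m-r+1)$ for $1\le r\le m$, so $f(r)-f(r+1)=w$. Consequently the index $f(r)+r-1$ strictly decreases (by $w-1\ge1$) as $r$ grows, so $\Ssdn^{j}(\matr{H}[\varrho'],\gamma',{\cal R}')$ is attained at $r=j$; explicitly $\Ssdn^{k+1}=w+k$ and $\Ssdn^{k}=2w+k-1$, whose difference is $w-1=\xi$.

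The entire difficulty sits in the strong part. The routine items — the one-copy values for $\matr{D}_{_{k+1}}$ and the additivity of $\Wsdn^{j}$ — are bookkeeping on top of Theorem~\ref{WSDIF}. The real work is to exhibit a concrete, bounded-diameter rule-base ${\cal R}'$ (compatible with ${\sf R}^{^{Tu}}_{_{1}}(2)$ and satisfying properties (a)--(c) of Definition~\ref{RULEBASE}) that enforces the cumulative conjunction ``every earlier layer is coloured,'' and then to prove the matching lower bound $f(r)\ge w(m-r+1)$ — that no seeding of fewer than a full layer, and no seeding of auxiliary vertices, can buy a round. Establishing that partial-layer seeds cannot accelerate the front, so that the minimum of the index is genuinely realised at $r=j$ and not at some smaller round count, is the crux.
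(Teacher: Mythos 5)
Your weak half is correct and is only a cosmetic variant of the paper's: the paper glues the $\xi$ copies of $\matr{D}_{_{k+1}}$ at the shared pair $\{u,v\}$ (so its values are $\xi+2$ versus $2$), while you take disjoint copies and use additivity of $\Wsdn^{j}$ over components (giving $3\xi$ versus $2\xi$); both yield the difference $\xi$, and your per-copy values $\Wsdn^{k+1}=2$, $\Wsdn^{k}=3$ are exactly the paper's base case.

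The strong half, however, has a genuine gap: you never construct the objects your argument needs. The ladder graph, the ``cumulative'' rule-base ${\cal R}'$ (which must be a legitimate $d$-bounded rule-base satisfying conditions (a)--(c) of Definition~\ref{RULEBASE}, in particular soundness: the rules may not change the solution set), and the rigid law $f(r)=w(m-r+1)$ together with its lower bound are all deferred --- you say yourself that exhibiting ${\cal R}'$ and ruling out partial-layer seeds ``is the crux.'' A plan whose crux is unproved is not a proof, and it is far from obvious that a solution-preserving, monotone rule-base enforcing the conjunction ``every earlier layer is entirely coloured'' exists at all. Moreover, the detour is unnecessary: the paper handles the strong half with the \emph{same} gadget family and the \emph{same} rule-base ${\cal R}_{_{T}}$, simply taking the amalgam $\matr{G}_{_{\xi+k}}$ of $\xi+k$ copies of $\matr{D}_{_{k+1}}$ glued at $u,v$. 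There the shared seed $\{u,v\}$ solves everything in $k+1$ rounds, giving $\Ssdn^{k+1}=k+2$, while any defining set finishing within $k$ rounds must contain an extra vertex in each copy (the induction of Claim~1), forcing $\Ssdn^{k}=\xi+k+2$; the difference is $\xi$. Note that the gluing at $\{u,v\}$ is exactly what your disjoint-union choice destroys: for disjoint copies the cheap seed costs $2\xi$ rather than $2$, and the strong difference comes out to $\xi-k$ (when $\xi>k$) instead of $\xi$ --- which is presumably what pushed you toward inventing a new gadget, where the paper instead keeps the amalgam so that one construction serves both halves.
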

\begin{proof}{
For each $\xi \geq 1$ consider the following ordered marked
graph,
$$\matr{G}_{_{\xi}}[\rho][u,v] \isdef \sum_{j=1}^{\xi}{\matr{D}^{j}_{_{k+1}}[\varrho][u,v]},$$
where each the ordering $\varrho$ is defined in
Definition~\ref{DnGraph}, and the ordering $\rho$ is given by,
$$ \rho(w) \isdef \left \{ \begin{array}{lll}
1 & w=u,\\
2 & w=v,\\
(3k-1)(j-1)+\varrho(w) &  \{u,v\} \not \ni w \in V(\matr{D}^{j}_{_{k+1}}[u,v,z]).
\end{array} \right. $$
Also, consider the following $3$-coloring $\gamma_{_{1}}$ for each
${\matr{D}^{j}_{_{k+1}}[\varrho][u,v]}$,
$$ \gamma_{_{1}} \isdef \left \{ \begin{array}{lll}
\gamma_{_{1}}(u)=3,\\
\gamma_{_{1}}(v)=1,  \\
\gamma_{_{1}}(z)=1, \\
\gamma_{_{1}}(x_{_i})=2 & \forall \ i,\\
\gamma_{_{1}}(v_{_{1,i}})=3 & \forall \ i,\\
\gamma_{_{1}}(v_{_{2,i}})=1 & \forall \ i.\\
\end{array} \right. $$
Define the coloring $\gamma$ on $\matr{G}_{_{\xi}}[\rho]$ to be the
coloring whose restriction on each copy of
${\matr{D}^{j}_{_{k+1}}[\varrho][u,v]}$ is $\gamma_{_{1}}$. By
considering the vertex $v$ and the fact that it must be contained in
any $\sds_{_{{\cal R}_{_{T}}}}$ for $\gamma$, one may check that
$$\Wsdn^{k+1}(\matr{G}_{_{\xi}}[\rho],\gamma,{\cal R}_{_{T}}) \geq 2.$$
Also, by setting $A=\{u,v\}$, it is easy to see that the list
coloring problem \linebreak $(\matr{G}_{_{\xi}}[\rho],{\cal L}^{^
{A,\gamma}},3)$ is $({\cal R}_{_{T}},2)$-solvable in exactly $k+1$
round and returns the coloring $\gamma$ as its solution,
 which proves that $A$ is a
$\Wsds^{k+1}(\matr{G}_{_{\xi}}[\rho],\gamma,{\cal R}_{_{T}})$, and
consequently,
$$\Wsdn^{k+1}(\matr{G}_{_{\xi}}[\rho],\gamma,{\cal R}_{_{T}})=2.$$
On the other hand, we prove
$\Wsdn^{k}(\matr{G}_{_{\xi}}[\rho],\gamma,{\cal R}_{_{T}})=\xi+2,$
using induction on $\xi$.\\
For the initial case $\xi=1$ we must prove that
$$ \Wsdn^{k}(\matr{D}_{_{k+1}}[\varrho],\gamma_{_{1}},{\cal R}_{_{T}})=3, $$
which follows from the facts that
$$\forall \ z \in V(\matr{D}_{_{k+1}})\quad CC^{k}(\{v,z\},\matr{D}_{_{k+1}}[\varrho,\gamma_{_{1}}],{\cal R}_{_{T}})
\not = \matr{D}_{_{k+1}}[\varrho,\gamma_{_{1}}], $$ and
$$ CC^{k}(\{v,u,x_{_{1}}\},\matr{D}_{_{k+1}}[\varrho,\gamma_{_{1}}],{\cal R}_{_{T}}) = \matr{D}_{_{k+1}}[\varrho,\gamma_{_{1}}].$$
 Therefore, we assume that the
claim is true for $\xi=t$, i.e.
$$\Wsdn^{k}(\matr{G}_{_{t}}[\rho],\gamma,{\cal R}_{_{T}})=t+2.$$ Let
$\matr{X}_{_{1}}$ be the induced subgraph of
$\matr{G}_{_{t+1}}[\rho,\gamma]$ on the vertex set of the copy
$\matr{G}_{_{t}}[\rho,\gamma]$, and also let $\matr{X}_{_{2}}$ be
the the induce subgraph, formed on the vertices \linebreak
$V(\matr{G}_{_{t+1}}[\rho,\gamma]) - V(\matr{X}_{_{1}})$. Note that
these two subgraphs satisfy the conditions of
Corollary~\ref{CCK_COL} and let $B_{_{1}}$ be a
$\Wsds^{k}(\matr{X}_{_{1}},\gamma|_{_{{\matr{X}_{_{1}}}}},{\cal
R}_{_{T}}).$ Then it is easy to check that
$$CC^{k}(B_{_{1}},\matr{G}_{_{t+1}}[\rho,\gamma],{\cal R}_{_{T}})
\subsetneq \matr{G}_{_{t+1}}[\rho,\gamma].$$
 Hence, since by induction
hypothesis we have
$$\Wsdn^{k}(\matr{X}_{_{1}},\gamma|_{_{{\matr{X}_{_{1}}}}},{\cal
R}_{_{T}})=t+2$$ and
$\Wsdn^{k}(\matr{X}_{_{2}},\gamma|_{_{{\matr{X}_{_{2}}}}},{\cal
R}_{_{T}})$ is at least equal to $1$, by Corollary~\ref{CCK_COL} we
have
$$\Wsdn^{k}(\matr{G}_{_{t+1}}[\rho],\gamma,{\cal R}_{_{T}}) > \max\{t+2,1\}=t+2.$$
On the other hand, by taking $B_{_{2}} \isdef B_{_{1}} \cup
\{x^{^{t+1}}_{_{1}}\}$, the list coloring problem \linebreak
$(\matr{G}_{_{t+1}}[\rho],{\cal L}^{^ {B_{_{2}},\gamma}},3)$ is
$({\cal R}_{_{T}},2,k+1)$-solvable,
 which shows that
$$\Wsdn^{k}(\matr{G}_{_{t+1}}[\rho],\gamma,{\cal R}_{_{T}})=t+3,$$
and the induction is complete.\\
For the second part, first we prove the following,
\begin{itemize}
\item{{\bf Claim 1.} Let $A \subseteq V(\matr{G}_{_{\xi}}[\rho,\gamma])$ be such that  the list coloring problem
$(\matr{G}_{_{\xi}}[\rho],{\cal L}^{^ {A,\gamma}},3)$ is $({\cal R}_{_{T}},2,k+1)$-solvable, then $|A| \geq \xi+2$.\\
We prove this by induction on $\xi$. For $\xi=1$ it is easy to see that
$$CC^{k}(\{v\},\matr{D}_{_{k+1}}[\rho,\gamma_{_{1}}],{\cal R}_{_{T}})=\{v\}.$$
Also, note that for every $x \in V(\matr{D}_{_{k+1}})$, if  $A=\{v,x\}$ then
$$CC^{k}(\{v,x\},\matr{D}_{_{k+1}}[\rho,\gamma_{_{1}}],{\cal R}_{_{T}})\subsetneq \matr{D}_{_{k+1}}[\rho,\gamma_{_{1}}], $$
while for $A=\{u,v,x_{_{1}}\}$ we have,
$$CC^{1}(A,\matr{D}_{_{k+1}}[\rho,\gamma_{_{1}}],{\cal R}_{_{T}})= \matr{D}_{_{k+1}}[\rho,\gamma_{_{1}}], $$
which shows that $|A| \geq 3$.\\
Hence, let the claim be true for any $\xi \leq s$. For $\xi=s+1$ let $\matr{G}_{_{s+1}}$ be partitioned into $\matr{X}_{_{1}}$ a copy of
$\matr{G}_{_{s}}$, and $\matr{X}_{_{2}} \isdef \matr{G}_{_{s+1}}-\matr{G}_{_{s}}$. Now, by the induction hypothesis and
 Corollary~\ref{CCK_COL}, any $\sds^{k}(\matr{X}_{_{1}},\gamma|_{_{\matr{X_{_{1}}}}},{\cal R}_{_{T}})$ is of size at least $s+2$. Since any such set for $\matr{X}_{_{2}}$
has at least one vertex, and one can verify that the hypothesis of Corollary~\ref{CCK_COL} is satisfied, we have
$$|A| \geq \Wsdn^{k}(\matr{G}_{_{s+1}},\gamma,{\cal R}_{_{T}})  \geq s+3=\xi+2.$$
}
\end{itemize}
As a direct consequence of this claim we have
$$\Ssdn^{k}(\matr{G}_{_{\xi+k}}[\rho],\gamma,{\cal R}_{_{T}}) \geq \xi+k+2,$$
and the fact that for $A \isdef
\{u,v,x^{^{1}}_{_{1}},\cdots,x^{^{\xi+k}}_{_{1}}\}$ the list
coloring problem \linebreak $(\matr{G}_{_{\xi+k}}[\rho],{\cal L}^{^
{A,\gamma}},3)$ is $({\cal R}_{_{T}},2)$-solvable, in $k$ rounds,
shows that equality holds and we actually have,
$$\Ssdn^{k}(\matr{G}_{_{\xi+k}}[\rho],\gamma,{\cal R}_{_{T}}) = \xi+k+2.$$
Moreover, we show that
$$\Ssdn^{k+1}(\matr{G}_{_{\xi+k}}[\rho],\gamma,{\cal R}_{_{T}})=k+2.$$
For this, again, by considering the coloring properties of the
vertex $v$ and the set $A=\{u,v\}$, it is easy to see that the list
coloring problem $(\matr{G}_{_{\xi +k}}[\rho],{\cal L}^{^
{A,\gamma}},3)$ is $({\cal R}_{_{T}},2)$-solvable, in exactly $k+1$
rounds and returns the coloring $\gamma$,
 which shows that
 $$\Ssdn^{k+1}(\matr{G}_{_{\xi+k}}[\rho],\gamma,{\cal R}_{_{T}}) \leq k+2.$$
To prove equality, by contradiction, let's assume that $A_{_{1}}$ is
an arbitrary set such that the list coloring problem
$(\matr{G}_{_{\xi+k}}[\rho],{\cal L}^{^ {A_{_{1}},\gamma}},3)$ is
$({\cal R}_{_{T}},2,k+2)$-solvable, with
$$\iota_{_{\matr{G}_{_{\xi +k}}[\rho,\gamma],{\cal R}_{_{T}}}}(A_{_{1}}) < k+2.$$
\begin{itemize}
\item{{\bf Claim 2.} The list coloring problem
$(\matr{G}_{_{\xi+k}}[\rho],{\cal L}^{^
{A_{_{1}},\gamma}},3)$ is $({\cal R}_{_{T}},2,k+1)$-solvable, i.e. $rounds \leq k.$\\
For this, again note that,
$$CC^{k}(\{v\},\matr{G}_{_{\xi+k}}[\rho,\gamma],{\cal R}_{_{T}})=\{v\},$$
and as in the previous case, $|A_{_{1}}| \geq 2$ with $v \in A_{_{1}}$, which shows that
$$2+rounds-1 \leq |A_{_{1}}|+rounds-1 < k+2 \ \ \Rightarrow \ \  rounds \leq k. $$
}
\end{itemize}
Therefore, using Claim~$1$ we deduce,
$$\iota_{_{\matr{G}_{_{\xi +k}}[\rho,\gamma],{\cal R}_{_{T}}}}(A_{_{1}}) \geq |A_{_{1}}| \geq \xi+k+2 > k+2,$$
which is a contradiction.
 }\end{proof}
\section{Some complexity results
}\label{COMPLEXITY}
In this section we consider a couple of basic computational problems related to
defining sets of sequential coloring of graphs and we settle their
computational complexity in the class of ${\bf NP}$-complete
problems. It should be noted that similar results in the case of {\it greedy online coloring} of graphs have already been settled  by M.~Zaker
(see \cite{ZAK01-1,ZAK01-2,ZAK06,ZAK08-1}). In this article we have considered the more general setup of {\it sequential coloring}
of graphs which asks for new proofs, even in the case of greedy algorithms, however, in what follows we not only state and prove our
theorems in this general framework, but also we focus our proofs on the case of {\it structural} rule-bases to show that the ${\bf NP}$-completeness
results are true even in this more restricted case.\\
Also, we would like to mention, in spite of the fact that we have not been able to present concise ${\bf NP}$-completeness proofs that hold for any
given rule-base, but we conjecture that our results are true  for any such nontrivial rule-base in general. Our results in this regard, (Theorem~\ref{T3}$(b)$ and Theorem~\ref{T5}) can be considered as justifications of this conjecture, where one should note that the condition of
having the rule ${\sf R}^{^{Tu}}_{_{1}}$ in the rule-base, is just a bit stronger than the natural condition of having ${\sf R}^{^{Tu}}_{_{1}}(2)$ as a necessary condition. \\
To begin let us consider the $3$-colorability problem as follows,
\begin{prb}{\bf (3COL)}\\
{\bf Given:} A graph $\matr{G}$.\\
{\bf Query:} Is $\matr{G}$ $3$-colorable?
\end{prb}
which is used to prove the ${\bf NP}$-completeness of the following problems as it is stated in the next forthcoming  theorem.
\begin{prb}{\bf (COLWDS$_{_{{\cal R}}}^{k}$)}\\
{\bf Constant:} A $d$-bounded $t$-coloring rule-base ${\cal R}$ and an integer $k \geq 1$.\\
{\bf Given:} An ordered graph $\matr{G}[n]$, and an integer $\xi \geq 1$.\\
{\bf Query:} Is it true that there exists a $t$-coloring $\gamma$
such that $\Wsdn^{k}(\matr{G}[n],\gamma,{\cal R}) \leq \xi$?
\end{prb}
\begin{prb}{\bf (COLSDS$_{_{{\cal R}}}^{k}$)}\\
{\bf Constant:} A $d$-bounded $t$-coloring rule-base ${\cal R}$ and an integer $k \geq 1$.\\
{\bf Given:} An ordered graph $\matr{G}[n]$, and an integer $\xi \geq 1$.\\
{\bf Query:} Is it true that there exists a $t$-coloring $\gamma$
such that $\Ssdn^{k}(\matr{G}[n],\gamma,{\cal R}) \leq \xi$?
\end{prb}
\begin{thm}\label{T3}
\begin{itemize}
\item[{\rm a)}]{For every integer $k \geq 1$, both problems {\bf COLWDS$_{_{{\cal R}_{_{T}}}}^{k}$}
and {\bf COLSDS$_{_{{\cal R}_{_{T}}}}^{k}$} are ${\bf NP}$-complete.}
\item[{\rm b)}]{For every $d$-bounded $3$-coloring structural rule-base ${\cal R}$ that contains Tucker's first rule ${\sf R}^{^{Tu}}_{_{1}}$,
both problems  {\bf COLWDS$^{1}_{_{{\cal R}}}$} and {\bf COLSDS$_{_{{\cal R}}}^{1}$} are ${\bf NP}$-complete.}
\end{itemize}
\end{thm}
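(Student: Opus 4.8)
The plan is to prove both parts by the usual two-step recipe for ${\bf NP}$-completeness: first membership in ${\bf NP}$, then a polynomial-time reduction from {\bf 3COL}. Membership is the routine half. For either problem a witness for a positive instance is a pair $(\gamma,A)$, where $\gamma$ is a $t$-coloring of the base-graph and $A$ is a candidate defining set subject to $|A|\le\xi$ (for {\bf COLWDS}) or $\iota_{_{\matr{G}[n,\gamma],{\cal R}}}(A)\le\xi$ (for {\bf COLSDS}). To check the witness one verifies that $\gamma$ is proper and then simply runs {\bf OLG-${\cal R}$-Col} on $(\matr{G}[n],{\cal L}^{^{A,\gamma}},t)$ for at most $k$ rounds, confirming that it halts with $\gamma$ and, for the strong version, that $rounds\le k$ so that the index can be read off. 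Since $k$, $d$, $t$ and the whole rule-base ${\cal R}$ are \emph{fixed constants} of the problem, each call to {\bf Local-${\cal R}$-Update} inspects a $d$-neighborhood and tests only subgraphs isomorphic to the constant-size base-graphs of the finitely many rules of ${\cal R}$; there are $O(n^{c})$ of these for some constant $c$, so a round costs polynomial time and the whole verification is polynomial. Hence both problems lie in ${\bf NP}$.

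For the hardness I would reduce from {\bf 3COL}. Given $\matr{G}$, the plan is to build an ordered graph $\matr{G}'$ that is \emph{always} properly $3$-colorable, so that the mere existence of a target coloring is never at issue and all the information about $\matr{G}$ is pushed into the defining number. Concretely, I attach to a common palette $\matr{K}_{_{3}}$ a triangle gadget for each vertex and each edge of $\matr{G}$, wired so that a coloring of $\matr{G}'$ admits a defining set cheaper than its whole vertex set precisely when its gadget seeds can be chosen consistently with a proper $3$-coloring of $\matr{G}$; once the palette and one seed per gadget are fixed, Tucker's first rule ${\sf R}^{^{Tu}}_{_{1}}$ propagates the color through every triangle of the gadget. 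Taking $\xi$ just below the total number of gadget vertices, the forward direction is then immediate: a proper $3$-coloring of $\matr{G}$ induces a coloring $\gamma$ of $\matr{G}'$ whose seeds form an $\sds_{_{{\cal R}}}$ that forces everything, so $\Wsdn^{k}(\matr{G}'[n],\gamma,{\cal R})\le\xi$. This is exactly where the hypothesis ${\sf R}^{^{Tu}}_{_{1}}\in{\cal R}$ of part (b) is used; moreover, since any \emph{structural} rule only shrinks lists while preserving the solution-set (properties (a),(b) of Definition~\ref{RULEBASE}), the additional rules permitted in part (b) can never force a color that ${\sf R}^{^{Tu}}_{_{1}}$ alone would not already justify, so the forward construction is robust to the precise choice of ${\cal R}$, and designing the gadget so that all forcing completes in a single scan is what lets part (b) impose the bound $k=1$.

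The converse is where the real work lies and is the step I expect to be the main obstacle: one must show that when $\matr{G}$ is \emph{not} $3$-colorable, every coloring $\gamma$ of $\matr{G}'$ has $\Wsdn^{k}>\xi$, i.e.\ no cheap defining set survives. I would prove this by a coloring-closure argument in the spirit of Corollary~\ref{CCK_COL}: a monochromatic $\matr{G}$-edge leaves its gadget only partially forced, so any vertex in it whose color is not yet determined witnesses $CC^{k}(A,\matr{G}'[\varrho,\gamma],{\cal R})\neq\matr{G}'$, and an inductive partition of $\matr{G}'$ into its gadgets (each satisfying the hypothesis $(\star)$ of Corollary~\ref{CCK_COL}) then forces $A$ to charge almost every gadget, pushing $|A|$ above $\xi$. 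The delicate point is calibrating the gadget so that this lower bound is tight exactly at the $3$-colorability threshold and so that the round-count stays within the bounded parameter; controlling the interplay between propagation length and the allotted rounds is the subtle part of the estimate.

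Finally, the strong problem {\bf COLSDS} is handled in parallel, translating the size bound on $A$ into the index bound $\iota=|A|+rounds-1$; for the single-round case this is painless because part~(b) of Proposition~\ref{basics} identifies $\Ssds^{1}_{_{{\cal R}}}$ and $\Wsds^{1}_{_{{\cal R}}}$. To obtain part (a) for an \emph{arbitrary} round bound $k$ while keeping the rule-base equal to ${\cal R}_{_{T}}$, I would splice copies of the stretching graph $\matr{D}_{_{k}}[\varrho][u,v]$ of Definition~\ref{DnGraph} between the seeds and the gadgets, exactly as in the proofs of Theorems~\ref{WSDIF} and \ref{NONTRIVSPEC}, so that each gadget consumes a prescribed number of rounds and the whole computation finishes in $k$ rounds but not fewer, yielding the required ${\bf NP}$-completeness for every $k\ge 1$.
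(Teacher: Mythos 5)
Your skeleton matches the paper's: membership in ${\bf NP}$ by using {\bf OLG-${\cal R}$-Col} itself as a polynomial-time verifier for a guessed pair $(\gamma,A)$, then a reduction from {\bf 3COL} in which the constructed graph is \emph{always} $3$-colorable, the $3$-colorability of $\matr{G}$ is pushed into the defining-number threshold, copies of $\matr{D}_{_{k}}$ control the round count, and Corollary~\ref{CCK_COL} supplies lower bounds. However, there is a genuine gap exactly at the step you yourself flag as ``the main obstacle'', and your proposed calibration would fail. A monochromatic edge of $\matr{G}$ obstructs only the gadgets attached to \emph{that one} edge; it does not ``force $A$ to charge almost every gadget''. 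So with one gadget per edge the separation between the two cases is a single vertex, and one would need exact determinations of defining numbers to place $\xi$ — precisely the analysis you leave open. Moreover your threshold ``$\xi$ just below the total number of gadget vertices'' is inconsistent with your own forward direction, where the defining set consists of the seeds (roughly one per gadget plus the palette), which is far smaller than the total number of gadget vertices.

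The missing idea is \emph{amplification}. The paper replaces each edge $uv$ of $\matr{G}$ by $n=|V(\matr{G})|$ vertex-disjoint parallel copies of $\matr{D}_{_{k}}[u,v]$ (and $n+k$ copies for the strong version), with thresholds $\xi=n$ and $\xi=n+k-1$ respectively. If $\matr{G}$ is $3$-colorable, the $n$ original vertices form an $\sds$ that forces the whole graph in exactly $k$ rounds, so $\Wsdn^{k}\leq n$. If not, every $3$-coloring of the constructed graph restricts on $V(\matr{G})$ to an assignment with a monochromatic edge $xy$; since a structural rule-base preserves solution sets, no rule can ever color a gadget whose completion is not unique, so each of the $n$ parallel copies on $xy$ must contain its own vertex of $A$, and the induction of Theorem~\ref{NONTRIVSPEC} then pushes the count strictly past $\xi=n$ no matter how cleverly $A$ is distributed. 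This duplication is what makes the counting robust and removes any need for a tight threshold estimate; the same device with $n+k$ copies absorbs the $rounds$ term in the index for {\bf COLSDS}. A side remark on part (b): your claim that additional structural rules ``can never force a color that ${\sf R}^{^{Tu}}_{_{1}}$ alone would not already justify'' is false as stated — ${\sf R}^{^{Tu}}_{_{2}}$ forces identifications that ${\sf R}^{^{Tu}}_{_{1}}$ cannot. What part (b) actually needs, and what the paper's $\matr{D}_{_{1}}$-based construction uses, is only that \emph{every} structural rule preserves the solution set, so the amplified lower bound survives for an arbitrary structural ${\cal R}$, while extra rules can only help the upper-bound direction.
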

\begin{proof}{To prove  part $(a)$ note that  clearly both problems
are in ${\bf NP}$. We show that
{\bf 3COL} $\leq^{^{P}}_{_{m}}$ {\bf COLSDS$_{_{{\cal R}_{_{T}}}}^{k}$} and
{\bf 3COL} $\leq^{^{P}}_{_{m}}$ {\bf COLWDS$_{_{{\cal R}_{_{T}}}}^{k}$}.\\
Let $\matr{G}=(V(\matr{G}),E(\matr{G}))$ be a $3$-colorable ordered graph with $|V(\matr{G})|=n$, vertex ordering $\varsigma$ and the
edge ordering $\epsilon$.
We construct the new graph $\tilde{\matr{G}}$ by substituting every edge $uv$ of $\matr{G}$
by a copy of $\sum_{i=1}^{n}{\matr{D}_{_{k}}^{^{i}}[u,v]}$, i.e.
$$\tilde{\matr{G}}[\rho] \isdef \sum_{uv \in E(G)} \sum_{i=1}^{n}{\matr{D}_{_{k}}^{^{i,\epsilon(uv)}}[\varrho][u,v]},$$
where $\matr{D}_{_{k}}^{^{i,j}}[u,v]$'s are isomorphic copies of the
graph $\matr{D}_{_{k}}[\varrho][u,v]$ defined in
Definition~\ref{DnGraph}. Also, we define the ordering $\rho$ on
vertices of $\tilde{\matr{G}}$,
$$ \rho(z) \isdef \left \{ \begin{array}{lll}
\varsigma(z)& z \in V(\matr{G}),\\
n+(3k-1)(n(j-1)+i-1)+\varrho(z)&  \{u,v\} \not \ni  z \in V(\matr{D}^{i,j}_{_{k}}[u,v]),
\end{array} \right. $$
where $\varrho$ is as defined in Definition~{\ref{DnGraph}}. We show
that $\matr{G}$ is $3$-colorable if and only if among all
$3$-colorings of $\tilde{\matr{G}}[\rho]$, there exists a
$3$-coloring $\gamma$ such that,
$$\Wsdn^{k}(\tilde{\matr{G}}[\rho],\gamma,{\cal R}_{_{T}}) \leq n.$$
To see this, consider a proper $3$-coloring $\gamma_{_{1}}$ of $\matr{G}$ and let
$$A \isdef \displaystyle{\bigcup_{_{v \in V(\matr{G})}}} v \subseteq V(\matr{\tilde{G}}).$$
Now for every $uv \in  E(\matr{G})$, the $3$-coloring
$\gamma_{_{1}}$ of $\matr{G}$ assigns colors $\gamma_{_{1}}(u)$ and
$\gamma_{_{1}}(v)$. These colors induce a coloring
$\gamma_{_{\gamma_{_{1}}(u),\gamma_{_{1}}(v)}}$ on the graph
$\sum_{i=1}^{n}{\matr{D}_{_{k}}^{^{i,\epsilon(uv)}}[\varrho][u,v]}$.
This can easily be verified from the Definition~{\ref{DnGraph}}.
Define $\gamma$ on vertices of $\tilde{\matr{G}}[\varrho]$, to be
the accumulation of all such induced colorings:
$$ \gamma(z) \isdef \left \{ \begin{array}{lll}
\gamma_{_{1}}(z)& z \in V(\matr{G}),\\
\gamma_{_{\gamma_{_{1}}(u),\gamma_{_{1}}(v)}} & z \not \in
V(\matr{G}) \ \& \ z \in
V(\sum_{i=1}^{n}{\matr{D}_{_{k}}^{^{i,\epsilon(uv)}}[\varrho][u,v]}),
\end{array} \right. $$
It is easy to verify that the list coloring problem
$(\matr{\tilde{G}}[\rho,\gamma],{\cal L}^{^ {A,\gamma}},3)$ is
$({\cal R}_{_{T}},2)$-solvable, in exactly $k$ rounds and returns
the coloring $\gamma$, which shows that $A$ is a \linebreak
 $\sds^{k}(\matr{\tilde{G}}[\rho],\gamma,{\cal R}_{_{T}})$, and consequently,
$$\Wsdn^{k}(\tilde{\matr{G}}[\rho],\gamma,{\cal R}_{_{T}}) \leq  |A| = n.$$
On the other hand, if $\matr{G}$ is not $3$-colorable, then for every $3$-coloring $\gamma_{_{2}}$ of $\matr{G}$, there exists an edge
$xy \in E(G)$ such that $\gamma_{_{2}}(x)=\gamma_{_{2}}(y)$.
But since ${\cal R}_{_{T}}$ is a structural rule-base, it is easy to see that
$$CC^{k}(\{x,y\},\tilde{\matr{G}}[\rho,\gamma_{_{2}}],{\cal R}_{_{T}})=\tilde{\matr{G}}|_{_{\{x,y\}}} \simeq \overline{\matr{K}}_{_{2}},$$
which means that the algorithm can not color the structure between
$x$ and $y$. Now, the same kind of induction used in
Theorem~\ref{NONTRIVSPEC} can be used to show that every
$\sds_{{\cal R}_{_{T}}}$ for $\tilde{\matr{G}}[\rho,\gamma_{_{2}}]$
needs at least one vertex in each copy of $\matr{D_{_{k}}}$, to
prove that
$$\Wsdn^{k}( \tilde{\matr{G}}[\rho],\gamma_{_{2}},{\cal R}_{_{T}}) > n,$$
which is a contradiction.\\
For the case of $\Ssdn^{k}$, using the same approach as in the previous case, we define
$$\tilde{\matr{H}}[\rho'] \isdef \sum_{uv \in E(G)} \sum_{i=1}^{n+k}{\matr{D}_{_{k}}^{^{i,\epsilon(uv)}}[\varrho][u,v]},$$
with ordering $\rho'$, which is defined in the similar fashion that
$\rho$ was defined. One may prove that $\matr{G}$ is $3$-colorable
if and only if among all $3$-colorings of $\tilde{\matr{H}}[\rho']$,
there exists a $3$-coloring $\gamma$ such that,
$$\Ssdn^{k}(\tilde{\matr{H}}[\rho'],\gamma,{\cal R}_{_{T}}) \leq n+k-1,$$
and this completes the proof of part $(a)$.\\
Part $(b)$ can also be proved using the same method along with part $(b)$ of Proposition~\ref{basics},
using the graph
$$\tilde{\matr{N}} \isdef \sum_{uv \in E(G)} \sum_{i=1}^{n}{\matr{D}_{_{1}}^{^{i}}[u,v]}.$$
}\end{proof}
In what follows we focus on the following problems which are natural specialization of the previous problems
to colored graphs.
\begin{prb}{\bf (WDS$_{_{{\cal R}}}^{k}$)}\\
{\bf Constant:} A $d$-bounded $t$-coloring rule-base ${\cal R}$ and an integer $k \geq 1$.\\
{\bf Given:} An ordered colored graph $\matr{G}[n,\gamma]$, and an integer $\xi \geq 1$.\\
{\bf Query:} Is it the case that $\Wsdn^{k}(\matr{G}[n],\gamma,{\cal R}) \leq \xi$?
\end{prb}

\begin{prb}{\bf (SDS$_{_{{\cal R}}}^{k}$)}\\
{\bf Constant:} A $d$-bounded $t$-coloring rule-base ${\cal R}$ and an integer $k \geq 1$.\\
{\bf Given:} An ordered colored graph $\matr{G}[n,\gamma]$, and an integer $\xi \geq 1$.\\
{\bf Query:} Is it the case that $\Ssdn^{k}(\matr{G}[n],\gamma,{\cal R}) \leq \xi$?
\end{prb}
Given a graph $\matr{G}=(V(G),E(G))$, a (vertex) {\it covering set} $S \subseteq V(G)$ is a subset of vertices such that
every edge in $E(G)$ is incident to at least one vertex in $S$. In this regard one may consider the following problem,
\begin{prb}{\bf (VertexCover)}\\
{\bf Given:} A graph $\matr{G}$ and an integer $\xi$.\\
{\bf Query:} Does $\matr{G}$ have a covering set of size at most $\xi$?
\end{prb}
Let us also define a couple of graphs as follows.
\begin{defin}
{ \label{FnGraph} For each $k \geq 2$ we define the ordered colored
graph $\matr{F}_{_{k}}[\mu,\psi][u,v,x,y]$ as follows (see
Figure~\ref{GraphF}),
$$
\begin{array}{lll}
\matr{F}_{_{k}}[\mu,\psi][u,v,x,y]&\isdef
\matr{\varepsilon}[z_{_{1}},x]+\matr{\varepsilon}[\tilde{z}_{_{1}},y]+
\matr{\varepsilon}[z_{_{1}},v]+
\matr{\varepsilon}[\tilde{z}_{_{1}},v]+\matr{\varepsilon}[u,z_{_{3}}]\\
&\\
&+\matr{\varepsilon}[u,\tilde{z}_{_{3}}]+\matr{\varepsilon}[u,z_{_{5}}]
+\matr{\varepsilon}[u,\tilde{z}_{_{5}}]\\
&\\
&+\matr{K}_{_{3}}[z_{_1},z_{_2},z_{_3}]+\matr{K}_{_{3}}[\tilde{z}_{_1},\tilde{z}_{_2},\tilde{z}_{_3}]
+\matr{K}_{_{3}}[z_{_4},z_{_2},z_{_3}]+\matr{K}_{_{3}}[\tilde{z}_{_4},\tilde{z}_{_2},\tilde{z}_{_3}]\\
&\\
&+\matr{K}_{_{3}}[z_{_5},z_{_4},y]+\matr{K}_{_{3}}[\tilde{z}_{_5},\tilde{z}_{_4},x]\\
&\\
&+\displaystyle{\sum^{{k-1}}_{{i=1}}}\ (\matr{K}_{_{3}}[y_{_{i}},x_{_1,i},x_{_{2,i}}]+\matr{K}_{_{3}}[y_{_{i+1}},x_{_{1,i}},x_{_{2,i}}])\\
&\\
&+\displaystyle{\sum^{{k-1}}_{{i=1}}}\ (\matr{\varepsilon}[u,x_{_{1,i}}])\\
&\\
&+\matr{\varepsilon}[x,y_{_{k}}]+\matr{\varepsilon}[y,y_{_{k}}]+\matr{\varepsilon}[v,y_{_{k}}],\\
\end{array}
$$
with the ordering
$$ \mu \isdef \left \{ \begin{array}{lllllllll}
\mu(u)=1,  \mu(v)=2,  \\
\mu(x)=3 , \mu(y)=4,\\
\mu(z_{_{1}})=5, \mu(\tilde{z}_{_{1}})=6,\\
\mu(z_{_{4}})=7, \mu(\tilde{z}_{_{4}})=8,\\
\mu(z_{_{5}})=9, \mu(\tilde{z}_{_{5}})=10,\\
\mu(z_{_{3}})=11, \mu(\tilde{z}_{_{3}})=12,\\
\mu(z_{_{2}})=13, \mu(\tilde{z}_{_{2}})=14,\\
\mu(y_{_{i}})=15+(i-1) & \forall \ i,\\
\mu(x_{_{1,i}})=15+k+2(i-1) & \forall \ i,\\
\mu(x_{_{2,i}})=16+k+2(i-1) & \forall \ i,\\
\end{array} \right. $$
and the coloring (as depicted in Figure~\ref{GraphF})
$$ \psi \isdef \left \{ \begin{array}{lllllllll}
\psi(u)=1,  \psi(v)=2,  \\
\psi(x)=1 , \psi(y)=1,\\
\psi(z_{_{1}})=3, \psi(\tilde{z}_{_{1}})=3,\\
\psi(z_{_{4}})=3, \psi(\tilde{z}_{_{4}})=3,\\
\psi(z_{_{5}})=2, \psi(\tilde{z}_{_{5}})=2,\\
\psi(z_{_{3}})=2, \psi(\tilde{z}_{_{3}})=2,\\
\psi(z_{_{2}})=1, \psi(\tilde{z}_{_{2}})=1,\\
\psi(y_{_{i}})=3 & \forall \ i,\\
\psi(x_{_{1,i}})=2 & \forall \ i,\\
\psi(x_{_{2,i}})=1 & \forall \ i.\\
\end{array} \right. $$
}
\end{defin}

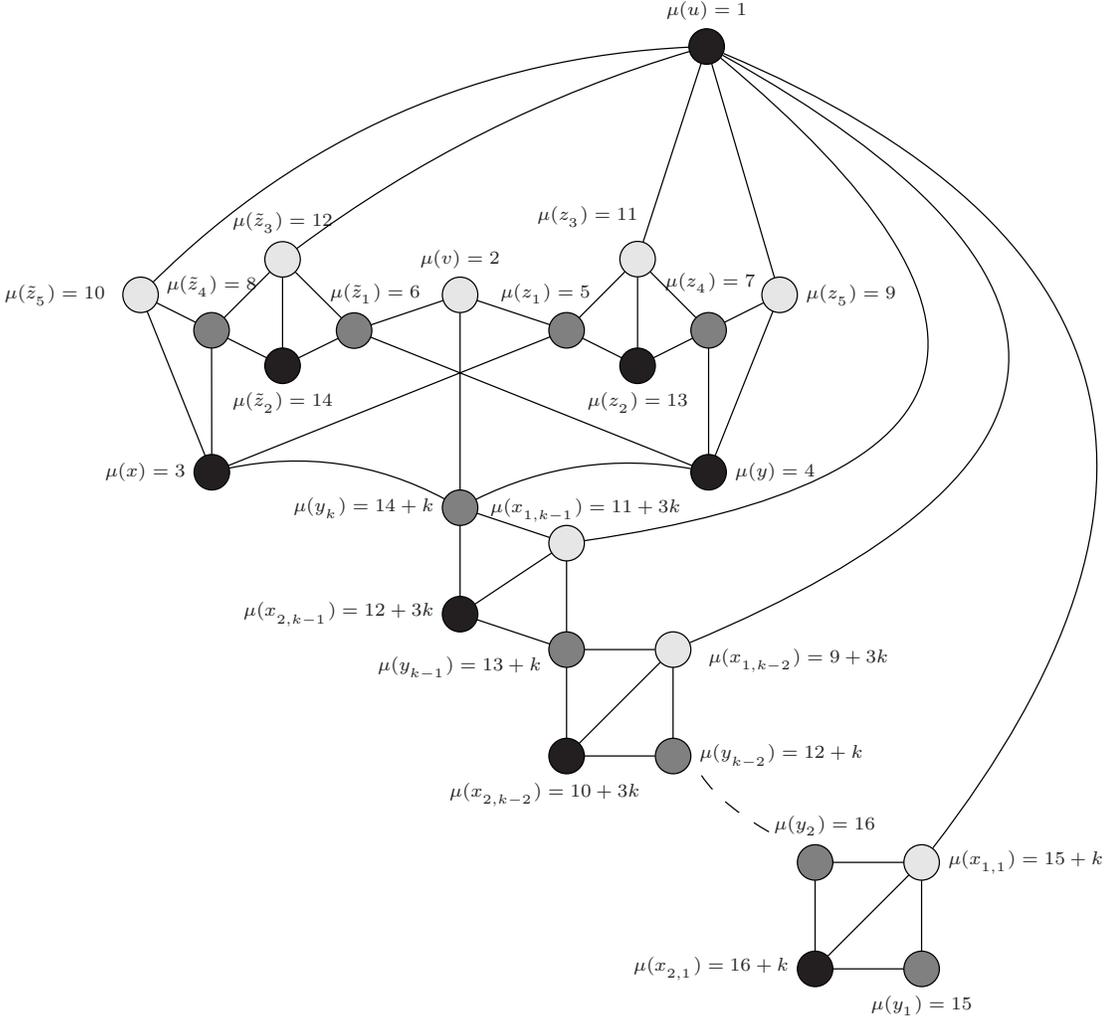
\begin{figure}[h]
\centering

\fontsize{7.5 pt}{7.5 pt}
\special{em:linewidth 1 pt} \unitlength 1.18mm \linethickness{0.5pt}
\gasset{linewidth=0.14,Nw=8.0,Nh=8.0,Nmr=4.0,Nadjustdist=1.0,ilength=5.0,flength=5.0,rdist=0.7,loopdiam=8.0,AHdist=1.41,AHLength=1.5,AHlength=1.41,ELdist=1.0}
\begin{picture}(128,122)(0,-122)
\put(0,-122){}
\gasset{Nw=4.0,Nh=4.0,Nmr=2.0,Nfill=y,ExtNL=y,NLdist=1.0,AHnb=0}
\node[fillgray=0.9](n1)(40.0,-40.0){$\mu(v)=2$}

\node[fillcolor=Black](n2)(67.78,-12.0){$\mu(u)=1$}

\node[fillcolor=Black,NLangle=180.0](n6)(12.03,-60.0){$\mu(x)=3$}

\node[fillcolor=Black,NLangle=0.0](n8)(68.0,-60.0){$\mu(y)=4$}

\node[fillgray=0.5,NLangle=60.0](n10)(28.06,-44.03){$\mu(\tilde{z}_{_{1}})=6$}

\node[fillcolor=Black,NLangle=-90.0](n13)(20.0,-48.0){$\mu(\tilde{z}_{_{2}})=14$}

\node[fillcolor=Black,NLangle=-90.0](n16)(60.0,-48.0){$\mu(z_{_{2}})=13$}

\node[fillgray=0.9](n21)(20.0,-36.0){$\mu(\tilde{z}_{_{3}})=12$}

\node[fillgray=0.9,NLdist=2.0,NLangle=180.0](n22)(4.0,-40.0){$\mu(\tilde{z}_{_{5}})=10$}

\node[fillgray=0.9,NLangle=140.0,NLdist=1.6](n25)(60.0,-35.97){$\mu(z_{_{3}})=11$}

\node[fillgray=0.9,NLangle=00.0](n26)(76.0,-40.0){$\mu(z_{_{5}})=9$}

\node[fillgray=0.5,NLdist=1.8](n28)(12.0,-44.0){$\mu(\tilde{z}_{_{4}})=8$}

\node[fillgray=0.5,NLangle=120.0](n29)(52.0,-44.0){$\mu(z_{_{1}})=5$}

\node[fillgray=0.5,NLdist=2.2,NLangle=88.0](n30)(68.0,-44.0){$\mu(z_{_{4}})=7$}

\node[fillgray=0.5,NLangle=180.0](n31)(40.0,-64.0){$\mu(y_{_{k}})=14+k$}

\node[fillgray=0.5,NLangle=190.0](n32)(52.0,-80.0){$\mu(y_{_{k-1}})=13+k$}

\node[fillgray=0.5,NLangle=0.0](n33)(64.0,-92.0){$\mu(y_{_{k-2}})=12+k$}

\node[fillgray=0.5,NLangle=75.0](n34)(80.0,-104.0){$\mu(y_{_{2}})=16$}

\node[fillgray=0.5,NLangle=-90.0](n35)(92.0,-116.0){$\mu(y_{_{1}})=15$}

\node[fillcolor=Black,NLangle=180.0](n37)(40.0,-76.0){$\mu(x_{_{2,k-1}})=12+3k$}

\node[fillcolor=Black,NLangle=-120.0](n38)(52.0,-92.0){$\mu(x_{_{2,k-2}})=10+3k$}

\node[fillcolor=Black,NLangle=180.0](n39)(80.0,-116.0){$\mu(x_{_{2,1}})=16+k$}

\node[fillgray=0.9,NLangle=60.0,NLdist=0.4](n41)(52.0,-68.0){$\mu(x_{_{1,k-1}})=11+3k$}

\node[fillgray=0.9,NLangle=-5.0,NLdist=2.0](n42)(64.0,-80.0){$\mu(x_{_{1,k-2}})=9+3k$}

\node[fillgray=0.9,NLangle=0.0](n43)(92.0,-104.0){$\mu(x_{_{1,1}})=15+k$}

\drawedge(n6,n29){}

\drawedge(n8,n10){}

\drawedge(n1,n10){}

\drawedge(n29,n1){}

\drawedge(n25,n29){}

\drawedge(n16,n29){}

\drawedge(n30,n16){}

\drawedge(n30,n25){}

\drawedge(n25,n16){}

\drawedge(n26,n30){}

\drawedge(n8,n30){}

\drawedge(n26,n8){}

\drawedge(n13,n10){}

\drawedge(n21,n10){}

\drawedge(n28,n21){}

\drawedge(n21,n13){}

\drawedge(n28,n13){}

\drawedge(n28,n6){}

\drawedge(n22,n28){}

\drawedge(n22,n6){}

\drawedge[curvedepth=2.95](n6,n31){}

\drawedge[curvedepth=-2.68](n8,n31){}

\drawedge(n1,n31){}

\drawedge(n31,n37){}

\drawedge(n41,n37){}

\drawedge(n41,n31){}

\drawedge(n32,n41){}

\drawedge(n37,n32){}

\drawedge(n38,n32){}

\drawedge(n42,n32){}

\drawedge(n33,n42){}

\drawedge(n38,n33){}

\drawedge(n42,n38){}

\drawedge(n34,n43){}

\drawedge(n35,n39){}

\drawedge(n43,n35){}

\drawedge(n34,n39){}

\drawedge(n43,n39){}

\drawedge[curvedepth=2.57](n21,n2){}

\drawedge(n25,n2){}

\drawedge[curvedepth=-7.06](n2,n22){}

\drawedge(n26,n2){}

\drawedge[curvedepth=33.62](n2,n41){}

\drawedge[curvedepth=-35.97](n42,n2){}

\drawedge[curvedepth=-31.71](n43,n2){}
\drawqbezier[dash={2.0 2.0 2.0 2.0}{0.0}](67.2,-94.19,69.5,-97.72,76.2,-101.34)

\end{picture}
\caption{\protect\label{GraphF} The graph
$\matr{F}_{_{k}}[\mu,\psi][u,v,x,y]$.  }
\end{figure}

\begin{thm}\label{T4}
Given an integer $k \geq 2$,
both problems {\bf SDS$_{_{{\cal R}_{_{T}}}}^{k}$} and {\bf WDS$_{_{{\cal R}_{_{T}}}}^{k}$}  are ${\bf NP}$-complete.
\end{thm}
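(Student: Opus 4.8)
The plan is to prove membership in ${\bf NP}$ directly and then to establish hardness by a polynomial reduction from \textbf{VertexCover}, using $\matr{F}_{_{k}}[\mu,\psi][u,v,x,y]$ of Definition~\ref{FnGraph} as an edge-replacement gadget. Membership is routine: a candidate defining set $A$ with $|A|\le\xi$ serves as a certificate, and since {\bf OLG-${\cal R}$-Col} is deterministic, Proposition~\ref{ALGDET} bounds the number of rounds that need to be simulated by $\|{\cal L}^{^{A,\gamma}}\|-|V(\matr{G})|$, so one checks in polynomial time that the algorithm halts with the prescribed coloring $\gamma$ and records the value of $rounds$ in order to evaluate the index $|A|+rounds-1$ in the strong case.

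For the hardness of \textbf{WDS}$^{k}_{_{{\cal R}_{_{T}}}}$, given a \textbf{VertexCover} instance $(\matr{G},\xi)$ with $V(\matr{G})=\{w_{_{1}},\dots,w_{_{n}}\}$, I would construct an ordered colored graph $\tilde{\matr{F}}$ by replacing each edge $e=w_{_{i}}w_{_{j}}\in E(\matr{G})$ with a fresh copy of $\matr{F}_{_{k}}$ whose marks $x,y$ are identified with $w_{_{i}},w_{_{j}}$ (so that gadgets sharing a vertex of $\matr{G}$ share that mark), and whose marks $u,v$ form a single globally shared pair. The local colorings $\psi$ amalgamate into a global $3$-coloring $\gamma$, since the shared marks carry consistent colors ($\psi(x)=\psi(y)=\psi(u)=1$ and $\psi(v)=2$), and the ordering $\rho$ is patched together from $\mu$ exactly as in the proof of Theorem~\ref{T3}.

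The heart of the argument is the following gadget property, which I would verify by tracing the forcings: once $u$ and $v$ are fixed, a single copy of $\matr{F}_{_{k}}$ is colored by ${\cal R}_{_{T}}$ within $k$ rounds \emph{if and only if} the defining set contains at least one of $x,y$, and otherwise its coloring closure $CC^{k}$ remains a proper induced subgraph. Here the chain $\sum_{i=1}^{k-1}(\matr{K}_{_{3}}[y_{_{i}},x_{_{1,i}},x_{_{2,i}}]+\matr{K}_{_{3}}[y_{_{i+1}},x_{_{1,i}},x_{_{2,i}}])$ with the pendant edges $\matr{\varepsilon}[u,x_{_{1,i}}]$ reproduces the $k$-round delay of $\matr{D}_{_{k}}$, forcing the round count, while the coupling block on $z_{_{1}},\dots,z_{_{5}},\tilde z_{_{1}},\dots,\tilde z_{_{5}}$ wired to $x,y,u,v$ is designed so that fixing a single endpoint propagates the forcing through $y_{_{k}}$ but fixing neither leaves $x$ and $y$ mutually undetermined. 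I expect this verification --- showing that the type-$1$ and type-$2$ Tucker forcings fire in the intended order and that $CC^{k}$ fills the whole gadget exactly under the cover condition --- to be the principal obstacle, being a more intricate version of the computations already carried out in Theorems~\ref{NONTRIVSPEC} and \ref{T3}.

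Granting this gadget property, any $\sds^{k}_{_{{\cal R}_{_{T}}}}$ must fix $u,v$ and must meet every edge-gadget in one of its endpoints, hence induces a vertex cover of $\matr{G}$; conversely $\{u,v\}$ together with any cover colors $\tilde{\matr{F}}$ in $k$ rounds. Thus $\Wsdn^{k}(\tilde{\matr{F}}[\rho],\gamma,{\cal R}_{_{T}})=2+\tau(\matr{G})$, where $\tau$ is the vertex-cover number, so the query $\Wsdn^{k}\le\xi+2$ is equivalent to $\matr{G}$ having a cover of size $\le\xi$. For \textbf{SDS}$^{k}_{_{{\cal R}_{_{T}}}}$ I would mimic the padding device of Theorem~\ref{T3}, replacing each edge by a union $\sum_{i=1}^{n+k}$ of gadget copies so that any defining set attaining the target index is forced to finish within $k$ rounds and therefore still corresponds to a vertex cover; Claim~$1$/Claim~$2$-type arguments as in the proof of Theorem~\ref{NONTRIVSPEC} then prevent trading rounds against defining vertices, converting the index bound into the same cover condition and completing the reduction.
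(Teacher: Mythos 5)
Your overall strategy --- membership via Proposition~\ref{ALGDET}, hardness via a reduction from \textbf{VertexCover} using $\matr{F}_{_{k}}$ as an edge gadget with a globally shared pair $u,v$, and padding by parallel gadget copies to control the round count in the strong case --- is the same as the paper's. However, the statement you place at the heart of the argument is false as written. You claim that, once $u$ and $v$ are fixed, a copy of $\matr{F}_{_{k}}$ is colored within $k$ rounds \emph{if and only if} the defining set contains one of $x,y$. The ``only if'' direction fails, because the defining set may instead contain interior vertices of the gadget: taking $\tilde{z}_{_{4}}$ and $\tilde{z}_{_{5}}$ (colors $3$ and $2$) as defining vertices, the triangle $\matr{K}_{_{3}}[\tilde{z}_{_{5}},\tilde{z}_{_{4}},x]$ forces $x=1$ by ${\sf R}^{^{Tu}}_{_{1}}$ already in the first round, after which the gadget (and every other gadget sharing the endpoint $x$) is colored in $k$ rounds exactly as if $x$ had been preset; even the single interior vertex $\tilde{z}_{_{2}}$ forces $x$ through the chain $\tilde{z}_{_{1}},\tilde{z}_{_{3}},\tilde{z}_{_{4}},\tilde{z}_{_{5}}$, only one round too late. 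Consequently your inference ``any $\sds^{k}$ must meet every edge-gadget in one of its endpoints, hence induces a vertex cover'' does not follow from what you propose to verify, and the equality $\Wsdn^{k}=2+\tau(\matr{G})$ is not established.

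The missing ingredient is precisely the replacement argument the paper supplies (its Claim~$3$, supported by Claims~$1$, $2$ and $4$): for a minimum (or minimum-index) defining set $A$ one considers its trace $S_{_{xy}}$ inside each edge-gadget bundle, shows $|S_{_{xy}}|\leq 2$, swaps $S_{_{xy}}$ for $\{x,y\}$ when $|S_{_{xy}}|=2$, and swaps it injectively for the first of $x,y$ to become colored when $|S_{_{xy}}|=1$ --- the key structural fact being that a single interior vertex cannot fix both endpoints in the same round, so interior seeds never beat endpoints within the $k$-round budget. Only after this normalization does a defining set yield a vertex cover of the correct size, the point being that an interior substitute always costs at least as much as the endpoint it replaces. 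Note also that the paper keeps the multiplicity $n+k+2$ of gadget copies per edge even where you use a single copy: that multiplicity is what makes finishing in fewer than $k$ rounds prohibitively expensive (its Claims~$1$--$2$), which is the index-versus-size tradeoff you allude to but do not carry out. Your single-copy weak reduction is likely salvageable, but only by proving the corrected, weaker gadget property (no endpoint in the defining set forces at least two interior vertices of that gadget within the $k$-round budget) together with the exchange argument above --- that is, by reconstructing exactly the part of the paper's proof that your proposal skips.
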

\begin{proof}{
It is clear that both problems {\bf SDS$_{_{{\cal R}_{_{T}}}}^{k}$} and {\bf WDS$_{_{{\cal R}_{_{T}}}}^{k}$}
  are in ${\bf NP}$ for any $k \geq 2$. To prove the completeness we show that
 {\bf VertexCover} $\leq^{^{P}}_{_{m}}$ {\bf SDS$_{_{{\cal R}_{_{T}}}}^{k}$}. The other case {\bf VertexCover} $\leq^{^{P}}_{_{m}}$
 {\bf WDS$_{_{{\cal R}_{_{T}}}}^{k}$} can be proved using a similar argument.\\
 Let $\matr{G}=(V(\matr{G}),E(\matr{G}))$ and an integer $t$ constitute an instance of  $\matr{ \bf VertexCover}$ with $|V(\matr{G})|=n$.
We construct an ordered colored graph
$\tilde{\matr{G}}[\rho,\psi_{_{1}}]$ and an integer
$\tilde{t}=t+k+1$ that constitute an instance of {\bf SDS$_{_{{\cal
R}_{_{T}}}}^{k}$} such that $\matr{G}$ admits a vertex-cover of size
at most $t$ if and only if
$$\Ssdn^{k}(\tilde{\matr{G}}[\rho],\psi_{_{1}},{\cal R}_{_{T}}) \leq  \tilde{t}=t+k+1.$$
For this, assume the vertex ordering $\varsigma$, and an edge ordering $\epsilon$ on the vertices and edges of $\matr{G}$, respectively,
and also choose two new
vertices $u \not \in V(\matr{G})$, $v \not \in V(\matr{G})$ and define,
$$\tilde{\matr{G}} \isdef \sum_{xy \in E(\matr{G})} \sum_{i=1}^{n+k+2}{\matr{F}_{_{k}}^{^{i,\epsilon(xy)}}[\mu,\psi][u,v,x,y]},$$
 where each $\matr{F}_{_{k}}^{^{i,\epsilon(xy)}}[\mu,\psi][u,v,x,y]$ is a copy of the graph $\matr{F}_{_{k}}[\mu,\psi][u,v,x,y]$
 defined in Definition~\ref{FnGraph}.
Moreover, we define the ordering $\rho$ on $V(\tilde{\matr{G}})$ as
follows,
$$ \rho(z) \isdef \left \{ \begin{array}{lllll}
1  &  z=u,\\
2  &  z=v,\\
\varsigma(z) & z \in V(\matr{G}), \\
n+\mu(z)-2+\\
 (3k+8)[(n+k+2)(\epsilon(xy)-1)+i-1]& z \not \in \{u,v\}\ \&\\
 & z \not \in V(\matr{G}) \ \&\\
 & z \in V(\matr{F}_{_{k}}^{^{i,\epsilon(xy)}}[\mu,\psi][u,v,x,y]),
\end{array} \right. $$
as well as the coloring $\psi_{_{1}}$ of $\matr{G}[\rho]$ as,
$$ \psi_{_{1}}(z) \isdef \left \{ \begin{array}{lllll}
1 &  z=u,\\
2&  z=v,\\
1 & z \in V(\matr{G}), \\
\psi(z) & z \in
V(\matr{F}^{^{i,\epsilon(xy)}}_{_{k}}[\mu][u,v,x,y]).
\end{array} \right. $$
Now, consider a vertex covering set $A$ of $\matr{G}$ with $|A| \leq
t$. First, note that the list coloring problem
$(\matr{F}^{^i}_{_{k}}[\mu,\psi],{\cal L}^{^ {\{z,u,v\},\psi}},3)$
is $({\cal R}_{_{T}},2)$-solvable in $k$ rounds. Therefore, for
$A_{_{1}} \isdef A \cup \{u,v\}$ as a defining set, the algorithm
\begin{center}
{\bf OLG-${\cal
R}_{_{T}}$-Col}$(\matr{\tilde{G}}[\rho,\psi_{_{1}}],{\cal L}^{^
{A_{_{1}},\psi_{_{1}}}},3,2,rounds,Done)$
\end{center}
stops with $Done=true$ and $rounds=k$ resulting in coloring
$\psi_{_{1}}$, which shows that $A_{_{1}}$ is a
$\sds^{k}(\matr{\tilde{G}}[\rho],\psi_{_{1}},{\cal R}_{_{T}})$, and
consequently,
$$\Ssdn^{k}(\tilde{\matr{G}}[\rho],\psi_{_{1}},{\cal R}_{_{T}}) \leq  t+k+1.$$
On the other hand, assume that $A$ is a
$\Ssds^{k}(\tilde{\matr{G}}[\rho],\psi_{_{1}},{\cal R}_{_{T}})$ of
index less than or equal to $t+k+1$ for $\matr{\tilde{G}}$.
\begin{itemize}
\item{{\bf Claim 1.} If $(\matr{\tilde{G}}[\rho,\psi_{_{1}}],{\cal L}^{^
{A,\psi_{_{1}}}},3)$ is $({\cal R}_{_{T}},2,k)$-solvable  then the
intersection of $A$ with $V(\sum_{i=1}^{n+k+2} \matr{F}_{_{k}}^{^{i,\epsilon(xy)}}[\mu,\psi][u,v,x,y])-\{u,v,x,y\}$ for each edge $xy$, is of size at least $n+k+2$.\\
To prove this, one may use a similar induction as used in Theorem~\ref{NONTRIVSPEC} to show that
 the intersection of $A$ with each component is nonempty. }
\item{{\bf Claim 2.}  The algorithm
{\bf OLG-${\cal
R}_{_{T}}$-Col}$(\matr{\tilde{G}}[\rho,\psi_{_{1}}],{\cal L}^{^
{A,\psi_{_{1}}}},3,2,rounds,Done)$
stops in exactly $k$ rounds.
Because, if this is not the case, then by Claim~$1$,
$$\iota_{_{\matr{\tilde{G}}[\rho,\psi_{_{1}}],{\cal R}_{_{T}}}}(A) \geq |A| \geq |E(\matr{G})|(n+k+2) > t+k+1,$$
which is a contradiction.
}
\item{{\bf Claim 3.} We show that there exists another $\sds^{k}(\matr{\tilde{G}}[\rho],\psi_{_{1}},{\cal R}_{_{T}})$, say $B$, of index less than or equal $t+k+1$ such that $B \subseteq V(G) \cup \{u,v\}$.\\
For this let
$$ S_{_{xy}} \isdef A \cap \left (V(\sum_{i=1}^{n+k+2} \matr{F}_{_{k}}^{^{i,\epsilon(xy)}}[\mu,\psi][u,v,x,y])-\{u,v\} \right ). $$
It is easy to see that $|S_{_{xy}}| \leq 2$, since if $|S_{_{xy}}| > 2$ for an edge $xy$, then one may exclude all vertices of $S_{_{xy}}$ from $A$ and add $x$ and $y$ to
obtain a new $\sds^{k}(\matr{\tilde{G}}[\rho],\psi_{_{1}},{\cal R}_{_{T}})$, $A'$, with a smaller index, which is a contradiction.\\
Hence, if $|S_{_{xy}}|=2$ for an edge $xy$, we may form the new set $A' \isdef (A-S_{_{xy}}) \cup \{x,y\}$ to obtain a new
$\sds^{k}(\matr{\tilde{G}}[\rho],\psi_{_{1}},{\cal R}_{_{T}})$ with the same index.\\
Also, if $|S_{_{xy}}|=1$ for an edge $xy$, one may form the set $A' \isdef (A-S_{_{xy}}) \cup \{z\}$, where $z$ is the first vertex in $\{x,y\}$ whose color is fixed when the algorithm OLG-${\cal R}_{_{T}}$-Col is coloring the graph $\matr{\tilde{G}}$.
Note that, the structure of $\matr{F}_{_{k}}$ ensures that the colors of $x$ and $y$ can not be set in the same round of the sequential coloring algorithm using only one vertex in $S_{_{xy}}$. Also, the mapping for the case $|S_{_{xy}}|=1$ is one-to-one, since otherwise, again by replacement, one obtains a $\sds^{k}$ with a smaller  index which is a contradiction.\\
Applying this procedure for all edges of $\matr{G}$ will give rise
to a $\sds^{k}(\matr{\tilde{G}}[\rho],\psi_{_{1}},{\cal R}_{_{T}})$,
$B$, with the same index as $A$. }
\item{{\bf Claim 4.} For each edge $xy$, we have $|S_{_{xy}}| \not = 0$.\\
This is clear, since otherwise, the graph $\sum_{i=1}^{n+k+2}
\matr{F}_{_{k}}^{^{i,\epsilon(xy)}}[\mu,\psi][u,v,x,y]$ can not be
colored in $k$ rounds, i.e.
$$CC^{^{k}}(B,\matr{\tilde{G}}[\rho,\psi_{_{1}}],{\cal R}_{_{T}}) \not = \matr{\tilde{G}}[\rho,\psi_{_{1}}].$$
}
\end{itemize}
Clearly, Claim~$4$ shows that $B-\{u,v\}$ is a vertex covering set of $\matr{G}$ with
$$|B-\{u,v\}| \leq (t+k+1)-rounds+1-2=t.$$
}\end{proof}
Now we try to generalize the previous result to any structural rule-base. In this regard we define,
\begin{defin}
{ \label{AnGraph} For each $n \geq 2$ we define the ordered colored
graph $\matr{H}_{_{n}}[\omega,\eta][x,y,v]$ as follows (see
Figure~\ref{GraphA}).
\begin{equation}
\begin{array}{lll}
\matr{H}_{_{n}}[\omega,\eta][x,y,v]&\isdef
\displaystyle{\sum^{{n}}_{{i=1}}}\ (\matr{K}_{_{3}}[x,u_{_{1,i}},u_{_{2,i}}]+\matr{K}_{_{3}}[y,u_{_{1,i}},u_{_{2,i}}])\\
&\\
&\ + \ \displaystyle{\sum^{{n}}_{{i=1}}}\
\matr{\varepsilon}[v,u_{_{1,i}}],\\
&\\
\end{array}
\end{equation}
with the ordering,
$$ \omega \isdef \left \{ \begin{array}{lll}
\omega(v)=1,  \\
\omega(x)=2n+2,\\
\omega(y)=2n+3,\\
\omega(u_{_{1,i}})=2i & \forall \ i,\\
\omega(u_{_{2,i}})=2i+1 & \forall \ i,\\
\end{array} \right. $$
and the coloring (as depicted in Figure~\ref{GraphA})
$$ \eta \isdef \left \{ \begin{array}{lllllllll}
\eta(v)=3,  \\
\eta(x)=1,\\
\eta(y)=1,\\
\eta(u_{_{1,i}})=2& \forall \ i,\\
\eta(u_{_{2,i}})=3& \forall \ i.\\
\end{array} \right. $$
}
\end{defin}
\begin{figure}[t]

\centering
\fontsize{8 pt}{8 pt}
\special{em:linewidth 0.9pt} \unitlength 1.15mm \linethickness{0.5pt}
\gasset{linewidth=0.14,Nw=8.0,Nh=8.0,Nmr=4.0,Nadjustdist=1.0,ilength=5.0,flength=5.0,rdist=0.7,loopdiam=8.0,AHdist=1.41,AHLength=1.5,AHlength=1.41,ELdist=1.0}
\begin{picture}(78,94)(0,-94)
\put(0,-94){}
\gasset{Nw=4.0,Nh=4.0,Nmr=2.0,Nfill=y,fillcolor=Black,AHnb=0}
\node[ExtNL=y,NLangle=0.0,NLdist=1.0](n1)(48.0,-48.0){$\omega(y)=2n+3$}

\node[ExtNL=y,NLangle=180.0,NLdist=1.0](n2)(24.0,-48.0){$\omega(x)=2n+2$}

\node[fillgray=0.9,ExtNL=y,NLdist=1.0](n9)(36.01,-16.0){$\omega(u_{_{1,1}})=2$}

\node[fillgray=0.5,ExtNL=y,NLangle=160.0,NLdist=1.0](n30)(36.0,-28.0){$\omega(u_{_{2,1}})=3$}

\node[fillgray=0.5,ExtNL=y,NLangle=220.0,NLdist=1.0](n16)(36.0,-52.0){$\omega(u_{_{2,2}})=5$}

\node[fillgray=0.9,ExtNL=y,NLangle=90.0,NLdist=1.0](n17)(36.0,-40.16){$\omega(u_{_{1,2}})=4$}

\node[fillgray=0.5,ExtNL=y,NLangle=-90.0,NLdist=1.0](n20)(36.0,-88.0){$\omega(u_{_{2,n}})=2n+1$}

\node[fillgray=0.9,ExtNL=y,NLdist=1.0,NLangle=190.0](n21)(36.0,-76.0){$\omega(u_{_{1,n}})=2n$}

\node[fillgray=0.5,ExtNL=y,NLdist=1.0
,NLangle=00.0](n26)(90.0,-48.0){$\omega(v)=1$}

\drawedge[curvedepth=15.0](n2,n9){}

\drawedge[curvedepth=15.0](n9,n1){}

\drawedge[curvedepth=8.0](n2,n30){}

\drawedge[curvedepth=8.0](n30,n1){}

\drawedge(n1,n17){}

\drawedge(n1,n16){}

\drawedge(n17,n16){}

\drawedge(n2,n16){}

\drawedge(n2,n17){}

\drawedge(n9,n30){}

\drawedge[curvedepth=-8.8](n21,n1){}

\drawedge(n20,n21){}

\drawedge[curvedepth=-18.0](n20,n1){}

\drawedge[curvedepth=18.0](n20,n2){}

\drawedge[curvedepth=8.8](n21,n2){}

\drawedge[curvedepth=8.0](n9,n26){}

\drawedge[curvedepth=8.0](n17,n26){}

\drawedge[curvedepth=-8.0](n21,n26){}

\drawcurve[dash={2.0 2.0 2.0 3.0}{0.0}](38.0,-73.0)(42.0,-68.5)(43.0,-64.0)(42.0,-59.5)(38.0,-55)

\end{picture}
\caption{\protect\label{GraphA} The graph
$\matr{H}_{_{n}}[\omega,\eta][x,y,v]$.  }
\end{figure}
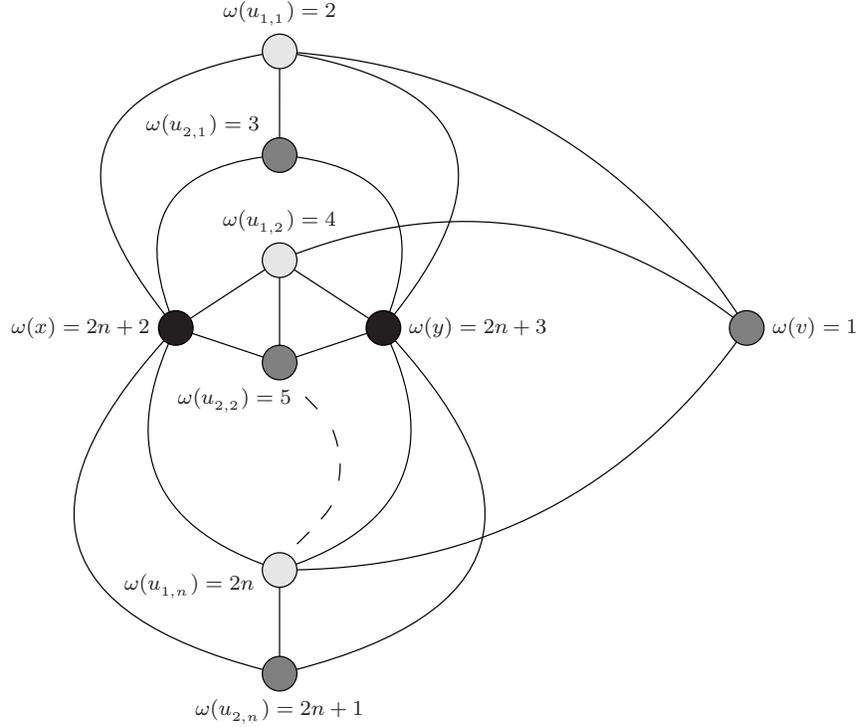

\begin{thm}\label{T5}
For every $d$-bounded $3$-coloring structural rule-base ${\cal R}$ that contains Tucker's first rule ${\sf R}^{^{Tu}}_{_{1}}$,
both problems {\bf WDS$_{_{{\cal R}}}^{1}$} and {\bf SDS$_{_{{\cal R}}}^{1}$} are ${\bf NP}$-complete.
\end{thm}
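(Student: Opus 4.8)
The plan is to reduce \textbf{VertexCover} to \textbf{SDS}$^{1}_{_{{\cal R}}}$; since Proposition~\ref{basics}(b) gives $\Ssdn^{1}=\Wsdn^{1}$ on every instance, the two problems coincide at $k=1$ and a single reduction settles both, and membership in ${\bf NP}$ is immediate (guess $\gamma$ and a candidate defining set and run {\bf OLG-${\cal R}$-Col} for one round). Given an instance $(\matr{G},t)$ of \textbf{VertexCover} with a vertex ordering $\varsigma$ and an edge ordering $\epsilon$, I would introduce a single global vertex $v$ and replace every edge $xy$ by a copy (or several copies) of the gadget $\matr{H}_{_{n}}[\omega,\eta][x,y,v]$ of Definition~\ref{AnGraph}, identifying the two apices with the endpoints $x,y\in V(\matr{G})$ and the mark $v$ with the global vertex; the colouring $\psi_{_{1}}$ extends $\eta$ by giving every vertex of $\matr{G}$ the colour $1$ and $v$ the colour $3$. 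The ordering $\rho$ is chosen so that $v$ comes first, all internal vertices $u_{_{1,i}},u_{_{2,i}}$ of all gadgets come next (respecting each $\omega$), and the apices (the vertices of $\matr{G}$) come \emph{last}, and I take $n$ to be polynomially large in $\matr{G}$. I will set $\xi=t+1$ and prove that $\Ssdn^{1}(\tilde{\matr{G}}[\rho],\psi_{_{1}},{\cal R})\le t+1$ exactly when $\matr{G}$ has a vertex cover of size $\le t$.

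For the easy direction, given a cover $C$ I take $D=\{v\}\cup C$. For each edge $xy$ some endpoint, say $x$, lies in $C$, so $x$ and $v$ carry their colours $1$ and $3$ from the start; since ${\cal R}$ contains ${\sf R}^{^{Tu}}_{_{1}}$ (in particular ${\sf R}^{^{Tu}}_{_{1}}(2)$), each $u_{_{1,i}}$, being adjacent to both $x$ and $v$, is forced to $2$, then each $u_{_{2,i}}$ to $3$ and finally $y$ to $1$, so the whole gadget is coloured in the \emph{first} round. Any extra structural rules in ${\cal R}$ can only shrink lists while preserving the solution set, hence they neither block nor miscolour; thus {\bf OLG-${\cal R}$-Col} halts after one round with $\psi_{_{1}}$, giving $\Ssdn^{1}\le|C|+1\le t+1$.

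For the converse I would show that any defining set $D$ solving the problem in one round has $|D|\ge\tau(\matr{G})+1$, where $\tau$ is the vertex-cover number. First $v\in D$: in $\psi_{_{1}}$ every neighbour $u_{_{1,i}}$ of $v$ has colour $2$, so no structural rule can force $v$ (a colour is forced only when it is constant over all solutions, and both $v=1$ and $v=3$ survive), whence uniqueness already fails unless $v$ is given. Next I claim $C:=D\cap V(\matr{G})$ is a vertex cover. Suppose an edge $xy$ has $x,y\notin D$. Because the apices are ordered last, throughout the first round $x$ and $y$ still carry full lists while the low-order internal vertices of the $xy$-gadget are visited; as structural rules respect the solution set and, with $v$ alone fixed, the gadget still admits colourings with $x=y\in\{1,2,3\}$, no $u_{_{1,i}}$ can be forced at the moment it is processed. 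Invoking Corollary~\ref{CCK_COL} and the inductive bookkeeping of Theorem~\ref{NONTRIVSPEC}, the $n$ parallel diamonds then force $D$ to spend strictly more vertices inside this single gadget than the one endpoint it saved, so an index-minimal $D$ covers every edge by an endpoint; hence $|D|\ge|C|+1\ge\tau+1$, and the two bounds yield the equivalence and ${\bf NP}$-completeness.

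The hard part will be making this lower bound \emph{robust} for an arbitrary structural rule-base: beyond ${\sf R}^{^{Tu}}_{_{1}}$, ${\cal R}$ may contain powerful local rules whose base-graphs have diameter up to the fixed bound $d$, and such a rule can legitimately combine the triangle- and edge-forcings of several diamonds at once, internally deducing an apex colour and relaying it to a \emph{neighbouring} gadget. Left unchecked this would let one cheap ``seed'' colour an entire component in a single round and destroy the correspondence with $\tau$. The crux is therefore to guarantee that every base-graph of every rule in ${\cal R}$ meets at most one gadget --- i.e.\ that distinct vertices of $\matr{G}$ sit at distance greater than $d$ in $\tilde{\matr{G}}$ --- so that all forcing is confined to a single gadget; within one isolated gadget, solution-preservation together with the apex-last schedule leaves an endpoint (or an equally expensive fully given diamond) as the only way to determine the apices, and the large value of $n$ makes the internal alternative strictly dearer. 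Establishing this locality against \emph{all} admissible ${\cal R}$, while respecting the monotonicity clause (c) of Definition~\ref{RULEBASE} and the canonical processing order, is the delicate point I expect to occupy most of the proof.
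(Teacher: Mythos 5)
Your plan is, in every particular, the paper's own proof: the same gadget $\matr{H}_{_{n}}[\omega,\eta][x,y,v]$ of Definition~\ref{AnGraph}, one copy per edge of $\matr{G}$ glued at a single global vertex $v$, the same colouring (all of $V(\matr{G})$ coloured $1$, $v$ coloured $3$), the same ordering ($v$ first, gadget internals next, the vertices of $\matr{G}$ last), the same threshold $\xi=t+1$, the same appeal to Proposition~\ref{basics}(b) to identify {\bf WDS$_{_{{\cal R}}}^{1}$} with {\bf SDS$_{_{{\cal R}}}^{1}$}, and the same forward direction via ${\sf R}^{^{Tu}}_{_{1}}(2)$.

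The gap is in the converse, and it is a gap you yourself flag but do not close --- and whose proposed repair cannot work. The cross-gadget ``relaying'' you worry about is a real phenomenon, not a technicality: list updates within one round of {\bf OLG-${\cal R}$-Col} are sequential and cumulative, and the internal vertices of two gadgets sharing an apex $z$ (or merely sharing $v$) lie at distance $2$ from each other. So for any $d\geq 2$ there is a legitimate structural rule --- e.g.\ take as base-graph the induced subgraph on $\{v,z,u_{_{1,1}},u_{_{2,1}},u'_{_{1,i}}\}$ (one determined diamond of an earlier gadget, the shared apex, one internal vertex of the later gadget; its diameter is $2$) and let the rule replace the list of $u'_{_{1,i}}$ by the projection of the local solution set, which satisfies (a), (b), (c) of Definition~\ref{RULEBASE} --- that reads the already-forced internals of an earlier-processed gadget \emph{across} the still-uncoloured apex and forces the later gadget in the same round. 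Starting from the seed $\{v,y\}$ this cascades through all gadgets of a connected graph in a single round when the edge-ordering cooperates: for the path on four vertices one gets $\Wsdn^{1}\leq 2=t+1$ with $t=1$, while the vertex cover number is $2$, so the ``no'' direction of the reduction fails for such an ${\cal R}$. Your fix --- keep distinct vertices of $\matr{G}$ at distance greater than $d$ --- is incompatible with the construction: all gadgets meet at $v$, apices of adjacent edges of $\matr{G}$ are identified, and since uniqueness requires each apex to be forced from its distance-$1$ gadget neighbourhood, that forcing is visible across the apex to the neighbouring gadget whenever $d\geq2$; no stretching of the gadget restores locality. In fairness, the paper does not close this gap either: its entire converse is the sentence that ``one may also prove the other side using the same method used in Theorem~\ref{T4}'', and the claims of Theorem~\ref{T4} are established only for the specific rule-base ${\cal R}_{_{T}}$, not for an arbitrary structural ${\cal R}\supseteq\{{\sf R}^{^{Tu}}_{_{1}}\}$. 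So your proposal reproduces the paper's argument faithfully, and the ``delicate point'' you identify is exactly where both your sketch and the published proof are incomplete; closing it would require either a construction that depends on ${\cal R}$, or an argument showing the cascading defining sets are themselves as hard to minimise as vertex covers.
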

\begin{proof}{
It is clear that {\bf SDS$_{_{{\cal R}_{_{T}}}}^{1}$} and {\bf WDS$_{_{{\cal R}_{_{T}}}}^{1}$}
for $k \geq 2$  are in ${\bf NP}$. Also, using part~$(b)$ of Proposition~\ref{basics}, it is sufficient to show that
$$ {\bf VertexCover} \leq^{^{P}}_{_{m}} {\bf SDS_{_{{\cal R}}}^{1}}={\bf WDS_{_{{\cal R}}}^{1}}.$$
Let $\matr{G}=(V(\matr{G}),E(\matr{G}))$ and an integer $t$
constitute an instance of  $\matr{ \bf VertexCover}$ with
$|V(\matr{G})|=n$. We construct an ordered colored graph
$\tilde{\matr{G}}[\rho,\eta_{_{1}}]$ and an integer $\tilde{t}=t+1$
that constitute an instance of {\bf SDS$_{_{{\cal R}}}^{1}$} such
that $\matr{G}$ admits a vertex-cover of size at most $t$ if and
only if
$$\Wsdn^{1}(\tilde{\matr{G}}[\rho],\eta_{_{1}},{\cal R})) \leq  \tilde{t}=t+1.$$
Since the proof is more or less the same as the previous proof of
Theorem~\ref{T4}, we just introduce the necessary data and an sketch
of the necessary steps. For this, assume the vertex ordering
$\varsigma$, and an edge ordering $\epsilon$ on the vertices and the
edges of $\matr{G}$, respectively. Choose a new vertex $v \not \in
V(\matr{G})$ and define,
$$\tilde{\matr{G}} \isdef \sum_{_{xy \in E(\matr{G})}} \matr{H}_{_{n}}^{^{\epsilon(xy)}}[\omega,\eta][x,y,v].$$
Also, define the ordering $\rho$ on $V(\tilde{\matr{G}})$,
$$ \rho(z) \isdef \left \{ \begin{array}{llll}
1  &  z=v,\\
2n|\matr{E(G)}| +1 + \varsigma(z) & z \in V(\matr{G}), \\
1+(2n(\epsilon(xy)-1))+\omega(z) & z \not \in \{v,x,y\}\ \& \\
&  z \in V(\matr{H}_{_{n}}^{^{\epsilon(xy)}}[\omega,\eta][x,y,v]).
\end{array} \right. $$
as well as the coloring $\eta_{_{1}}$ of $\matr{G}[\rho]$ as,
$$ \eta_{_{1}}(z) \isdef \left \{ \begin{array}{llll}
3 &  z=v,\\
1 & z \in V(\matr{G}), \\
\eta(z) & z \not \in \{x,y\}\ \& \\
& z \in V(\matr{H}^{^{\epsilon(xy)}}_{_{k}}[\rho][x,y,v]).
\end{array} \right. $$
Now, consider a vertex cover $A$ of $\matr{G}$ with $|A| \leq t$, and note that
$A_{_{1}} \isdef A \cup \{v\}$ will constitute a
$\sds^{1}(\matr{\tilde{G}}[\rho],\eta_{_{1}},{\cal R}_{_{T}})$ whose size is less than or equal to $t+1$.\\
One may also prove the other side using the same method used in Theorem~\ref{T4}.
}\end{proof}
\section{Appendix: graph amalgams}\label{APPNDX}
Following  \cite{DAHT?}, and what we discussed in Section~\ref{INTRO}, note that if $\varsigma: X \longrightarrow
Y$ is a (not necessarily one-to-one) map, then one can obtain a new
marked graph $(Y,\matr{H},\tau)$ by considering the push-out of the
diagram
$$\matr{Y} \stackrel {\varsigma}{\longleftarrow} \matr{X}
\stackrel {\varrho}{\longrightarrow} \matr{G}$$ in the category of
graphs. It is easy to check that the push-out exists and is a
monomorphism. Also, it is easy to see that the new marked graph
$(Y,\matr{H},\tau)$ can be obtained from $(X,\matr{G},\varrho)$ by
identifying the vertices in each inverse-image of $\varsigma$.
Hence, again (by abuse of language) we may denote
$(Y,\matr{H},\tau)$ as
$\matr{G}[\varsigma(x_{_{1}}),\varsigma(x_{_{2}}),\ldots,\varsigma(x_{_{k}})]$
where we allow repetition in the list appearing in the brackets.
Note that with this notation one may interpret $x_{_{i}}$'s as a set
of {\it variables} in the  {\it graph structure}
$\matr{G}[x_{_{1}},x_{_{2}},\ldots,x_{_{k}}]$, such that when one
assigns other (new and not necessarily distinct) {\it values} to
these variables one can obtain
some other graphs (by identification of vertices).\\
On the other hand, given two marked graphs $(X,\matr{G},\varrho)$
and $(Y,\matr{H},\tau)$ with
$X=\{x_{_{1}},x_{_{2}},\ldots,x_{_{k}}\}$ and
$Y=\{y_{_{1}},y_{_{2}},\ldots,y_{_{l}}\}$, one can construct their
amalgam $(X,\matr{G},\varrho)+(Y,\matr{H},\tau)$ by forming the
push-out of the following diagram,
$$\matr{H}  \stackrel {\tilde{\tau}}{\longleftarrow}  \matr{X} \cap \matr{Y}
\stackrel {\tilde{\varrho}}{\longrightarrow} \matr{G},$$ in which
$\tilde{\tau} \isdef \tau|_{_{X \cap Y}}$ and $\tilde{\varrho}
\isdef  \varrho|_{_{X \cap Y}}$. Following our previous notations we
may denote the new structure by
$$\matr{G}[x_{_{1}},x_{_{2}},\ldots,x_{_{k}}]+\matr{H}[y_{_{1}},y_{_{2}},\ldots,y_{_{l}}]$$
if there is no confusion about the definition of mappings. Note that
when $\matr{X} \cap \matr{Y}$ is the empty set, then the amalgam is
the {\it disjoint union} of the two marked graphs. Also, by the
universal property of the push-out diagram, the amalgam can be
considered as marked graphs marked by $X$, $Y$, $X \cup Y$ or $X
\cap Y$.

\vspace*{.5cm}\ \\
{\bf Acknowledgement} \ \\ \ \\
Both authors wish to express their sincere thanks to M.~Zaker for
his invaluable comments for improvement. \ \\

\end{document}